\documentclass[12pt]{amsart}
\usepackage[margin=1.25in]{geometry}
\usepackage{latexsym}
\usepackage{amsmath}
\usepackage{amssymb}
\usepackage{amsthm}
\usepackage{stmaryrd}
\usepackage{enumitem}
\usepackage[all]{xy}
\usepackage{mathtools}
\usepackage{xurl}
\usepackage[pagebackref]{hyperref}

\makeatletter
  
  \@addtoreset{equation}{thm}
  \makeatother

\title{Noetherian local rings without perfectoid towers}

\author{Tatsuki Yamaguchi}
\address{Department of Mathematics, School of Science, Institute of Science Tokyo}
\email{yamaguchi.t.bp@m.titech.ac.jp}

\subjclass[2020]{}

\def\ge{\geqslant}
\def\le{\leqslant}
\def\phi{\varphi}
\def\epsilon{\varepsilon}
\def\tilde{\widetilde}
\def\bar{\overline}
\def\mapsto{\longmapsto}

\def\Ker{\operatorname{ker}}

\def\m{{\mathfrak m}}
\def\n{{\mathfrak n}}

\newcommand{\F}{\mathbb{F}}
\newcommand{\N}{\mathbb{N}}
\newcommand{\Z}{\mathbb{Z}}

\newcommand{\gr}{\operatorname{gr}}

\theoremstyle{plain}
\newtheorem{thm}{Theorem}[section]
\newtheorem{cor}[thm]{Corollary}
\newtheorem{prop}[thm]{Proposition}
\newtheorem{lem}[thm]{Lemma}
\theoremstyle{definition}
\newtheorem{defn}[thm]{Definition}
\newtheorem{eg}[thm]{Example}
\theoremstyle{remark}
\newtheorem{rem}[thm]{Remark}
\newtheorem{setting}[thm]{Setting}
\newtheorem{notation}[thm]{Notation}
\newtheorem*{cl}{Claim}
\newtheorem*{clproof}{Proof of Claim}

\newtheorem*{acknowledgement}{Acknowledgments}

\begin{document}

\tolerance = 9999

\begin{abstract}
Let $k$ be a perfect field of characteristic $p>0$. We prove that if a $p$-torsionfree ring $R$, defined by homogeneous equations over $W(k)$, admits a perfectoid tower arising from $(R,(p))$, then its special fiber is $F$-liftable. We also give explicit examples admitting no such tower, including examples with $F$-liftable special fibers.
\end{abstract}

\maketitle
\markboth{}{}

\tableofcontents

\section{Introduction}
	The notion of perfectoid towers was introduced by Ishiro--Nakazato--Shimomoto \cite{INS25} to apply perfectoid methods to the study of Noetherian rings of mixed characteristic. A perfectoid tower $(\{R_i\}_{i\ge 0},\{t_i\}_{i\ge 0})$ consists of a sequence of rings $R_i$ and transition maps $t_i:R_i\to R_{i+1}$ satisfying a certain set of axioms. They proved that any complete local log-regular ring $R$ with perfect residue field of characteristic $p>0$ admits a perfectoid tower arising from $(R,(p))$.

 Ishizuka \cite{Ishizuka26} showed that certain prisms induce perfectoid towers. Hayashi \cite{Hayashi26} studied the ring-theoretic properties of Noetherian rings $R_i$ appearing in perfectoid towers.

Ishiro--Shimomoto \cite{Ishiro-Shimomoto26} developed further constructions using $\delta$-rings and highlighted the existence problem for perfectoid towers over complete Noetherian local domains.
In this paper, we study towers arising from $(R,(p))$ and derive obstructions from their first transition map.

Our first obstruction comes from Frobenius liftability of the special
fiber. Frobenius liftability modulo $p^2$, also called $F$-liftability, has been extensively studied in algebraic geometry. Local $F$-liftability has been studied by Zdanowicz \cite{Zdanowicz18} and Kawakami--Takamatsu \cite{Kawakami-Takamatsu26,Kawakami-Takamatsu26a}; see \cite{BTLM97,AWZ21,AWZ23} for global results.

The notion of a preperfectoid tower generalizes that of a perfectoid tower; see \cite{Hayashi26}.
For rings defined by homogeneous equations over $W(k)$,
we prove the following.

\begin{thm}[Theorem \ref{thm F-liftability of the closed fiber}] \label{main thm 1}
	Let $k$ be a perfect field of characteristic $p>0$. Assign positive weights $\deg x_i=d_i$, let $f_1,\dots,f_m\in W(k)\llbracket x_1,\dots,x_n \rrbracket$ be homogeneous polynomials and let  $R:=W(k)\llbracket x_1,\dots,x_n\rrbracket/(f_1,\dots,f_m)$. Suppose that $R$ is $p$-torsionfree.
	If $R$ admits a preperfectoid tower arising from $(R,(p))$, then $R/pR$ is $F$-liftable.
\end{thm}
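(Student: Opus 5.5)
The plan is to turn the first transition map $t_0\colon R\to R_1$ into an explicit Frobenius lift on $R/p^2R$. The lift is $W(k)$-flat because $R$ is $p$-torsionfree. Put $A:=R/pR$ and $P:=W(k)\llbracket x_1,\dots,x_n\rrbracket$. Since $A$ lifts to $R/p^2R$, the standard criterion says $A$ is $F$-liftable as soon as we find $g_1,\dots,g_n\in P$ such that
\[
x_i\mapsto x_i^p+p\,g_i, \qquad \lambda\mapsto \sigma(\lambda) \text{ for } \lambda\in W(k),
\]
sends each $f_j$ into $(p^2,f_1,\dots,f_m)$. Expanding $f_j$, this is the congruence
\[
\frac{f_j^{\sigma}(x^p)-f_j(x)^p}{p}+\sum_i \frac{\partial f_j^{\sigma}}{\partial x_i}(x^p)\,g_i\in (p,f_1,\dots,f_m).
\]
Here we use $f_j(x)^p\in (f)$, and $f_j^{\sigma}$ means $\sigma$ applied to the coefficients. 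So the task is to produce such $g_i$ from the tower.

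\emph{Step 1 (data from the tower).} I will use the axioms of a preperfectoid tower arising from $(R,(p))$, as I recall them; the precise form in the paper (or in \cite{Hayashi26}) should be checked. They give $p$-torsionfreeness of $R_1$. They give a surjective Frobenius projection $F_0\colon R_1/pR_1\to R/pR$ with $F_0\circ \bar t_0=$ Frobenius. Its kernel is generated by an element $\varpi_1$ with $\varpi_1^p\in pR_1$. Choose $y_i\in R_1$ with $F_0(\bar y_i)=\bar x_i$. Then $t_0(x_i)-y_i^p\in \ker F_0$ up to $p$. The intended refinement is $t_0(x_i)=y_i^p+p h_i$ with $h_i\in R_1$. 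If this does not hold directly, I expect to get it by correcting $y_i$ using the injectivity of $R/pR\to R_1/pR_1$.

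\emph{Step 2 (expand the relation).} Apply $t_0$ to $f_j(x)=0$, which gives
\[
0=f_j(y^p+ph)=f_j(y^p)+p\sum_i \partial_i f_j(y^p)h_i \pmod{p^2}.
\]
Next compare $f_j(y^p)$ with $(f_j^{\sigma^{-1}}(y))^p$; they are congruent mod $p$, with an explicit error term $p\,\delta$. We have $F_0(f_j^{\sigma^{-1}}(y))=f_j(x)=0$, so $f_j^{\sigma^{-1}}(y)\in(\varpi_1,p)$. Hence its $p$-th power lies in $pR_1$, since $\varpi_1^p\in pR_1$.

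\emph{Step 3 (descend).} Divide by $p$ using $p$-torsionfreeness of $R_1$, which gives $pR_1/p^2R_1\cong R_1/pR_1$. Then apply $F_0$ to land in $A$. The $h_i$ map to elements $F_0(\bar h_i)\in A$. Lift these to $P$ as candidate $g_i$, after twisting coefficients by $\sigma$ to match the congruence above. The remaining terms should reproduce exactly the obstruction $\big(f_j^{\sigma}(x^p)-f_j(x)^p\big)/p$ modulo $(p,f)$.

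\emph{Main obstacle.} The delicate term is $\big(f_j^{\sigma^{-1}}(y)\big)^p/p$ modulo $(p,\varpi_1)$. Knowing only that $f_j^{\sigma^{-1}}(y)\in(\varpi_1,p)$ does not force this term to vanish after applying $F_0$. This is where homogeneity enters. I will first try the $\mathbb{G}_m$-action $x_i\mapsto \lambda^{d_i}x_i$ for Teichmüller $\lambda$: it makes each $f_j$ an eigenvector, and I would use it to replace the $y_i$ by weighted-homogeneous choices. That should force the obstruction contribution to have the wrong weight unless it lies in $(p,f)$. Failing that, I would absorb it into a modification of the $g_i$ using the Euler relation $\sum d_i x_i\,\partial_i f_j=(\deg f_j)f_j$.
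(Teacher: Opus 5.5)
Your Steps 1--3 are sound. The refinement $t_0(x_i)=y_i^p+ph_i$ is automatic, since $\overline{t_0(x_i)}=\bar t_0(F_0(\bar y_i))=\bar y_i^{\,p}$. Writing $f_j^{\sigma^{-1}}(y)=\varpi_1w_j$ and $\varpi_1^p=pu$ with $u\in R_1^\times$, these steps yield
\[
\overline{\delta(f_j)}=-F_0(\bar u)\,F_0(\bar w_j)^p-\sum_i F_0(\bar h_i)\,\overline{\partial_i f_j}^{\,p}\quad\text{in } A .
\]
The gap is your target itself. A Frobenius lift on the \emph{given} lift $R/p^2R$ need not exist, so no weight argument or Euler-relation trick can dispose of $F_0(\bar u)F_0(\bar w_j)^p$.

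Here is a counterexample. Take $p$ odd and $f=x^p+py^p$, so $R=W(k)\llbracket x,y\rrbracket/(f)$ is homogeneous and $p$-torsionfree. It carries the perfectoid tower $R_i=W(k)\llbracket \pi_i,X_i,Y_i\rrbracket/(\pi_i^{p^i}-p,\,X_i^p+\pi_iY_i^p)$ with $t_i\colon \pi_i\mapsto\pi_{i+1}^p$, $X_i\mapsto X_{i+1}^p$, $Y_i\mapsto Y_{i+1}^p$. This is well defined because $(X_{i+1}^p)^p=(-\pi_{i+1}Y_{i+1}^p)^p$ for odd $p$, and it has the same shape as the paper's tower for $W(k)\llbracket x\rrbracket/(p^2+x^2)$.
\begin{itemize}
\item Here all $h_i=0$, $u=1$ and $w=-Y_1^p(1-\pi_1^{p-1})$, so the displayed term is $y^{p^2}=\overline{\delta(f)}$.
\item This term has the same weight $p\deg f$ as $\delta(f)$, so weights cannot remove it.
\item Since $\partial_xf\equiv\partial_yf\equiv 0\pmod p$, there is nothing to absorb it into.
\item Directly: any $x\mapsto x^p+pa$, $y\mapsto y^p+pb$ sends $f$ to $x^{p^2}+py^{p^2}\equiv py^{p^2}$ modulo $(f,p^2)$. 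Since $y^{p^2}\notin x^p\,k\llbracket x,y\rrbracket$, flatness shows $R/p^2R$ has no Frobenius lift.
\item Yet $A=k\llbracket x,y\rrbracket/(x^p)$ is $F$-liftable via $W_2(k)\llbracket x,y\rrbracket/(x^p)$.
\end{itemize}

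The missing idea is to change the lift. Homogeneity serves to make the bad term a $p$-th power, not to kill it.
\begin{itemize}
\item Let $v$ lift $-F_0(\bar u)$ and $g_j$ lift $F_0(\bar w_j)$. Multiply the relation by $v^{-1}$ and take the degree-$pe_j$ component, where $e_j=\deg f_j$. This works because $\delta(f_j)$ and $(\partial_if_j)^p$ are homogeneous and $[g^p]_{pe}=[g]_e^p$.
\item The unit is thereby replaced by the constant $[v^{-1}]_0\in k^\times$. It has a $p$-th root $\lambda^{-1}$ because $k$ is perfect. Hence $\overline{\delta(f_j)}=(\lambda[g_j]_{e_j})^p+\sum_i c_i'\,\overline{\partial_if_j}^{\,p}$, with $c_i'$ independent of $j$.
\item Put $\tilde f_j=f_j+pG_j$ with $G_j$ lifting $-\lambda[g_j]_{e_j}$. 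Since $\delta(f+pG)\equiv\delta(f)+G^p\pmod p$, Zdanowicz's criterion gives a Frobenius lift on $W_2(k)\llbracket x\rrbracket/(\tilde f_1,\dots,\tilde f_m)$. In the example, $\tilde f=x^p$.
\end{itemize}

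This leaves a second step your plan never addresses: flatness of the new lift over $W_2(k)$, i.e.\ that the $\bar G_j$ respect the syzygies of the $\bar f_j$. The paper obtains this from the tower as follows.
\begin{itemize}
\item Using $p$-torsionfreeness of $R$, lift a syzygy mod $p$ to an honest relation $\sum_jB_jf_j=0$.
\item Then $0=\sum_jB_j^{\sigma^{-1}}(y)f_j^{\sigma^{-1}}(y)=\varpi_1\sum_jB_j^{\sigma^{-1}}(y)w_j$ in a completion of $R_1$.
\item Cancelling $\varpi_1$ (by $\varpi_1$-torsionfreeness) and applying $F_0$ gives $\sum_j\bar B_j\bar g_j=0$.
\item Passing to homogeneous components gives the statement for the $\bar G_j$.
\end{itemize}
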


We next give explicit nonexistence results.

\begin{thm}[Theorems \ref{thm $x^p+y^p+z^p$}, \ref{thm one dimensional} and \ref{thm $p^2+x^p+yz$}]
	Let $k$ be a perfect field of characteristic $p>0$.
	The following rings $R$ do not admit a preperfectoid tower arising from $(R,(p))$.
	\begin{enumerate}
	\item $R=W(k)\llbracket x,y,z\rrbracket/(x^p+y^p+z^p)$ ($p\ge 5$).
	\item $R=W(k)\llbracket x\rrbracket/(x^b+pG)$, where $b\ge 3$ and $G\in (x^2,p)W(k)\llbracket x \rrbracket$.
	\item $R=W(k)\llbracket x,y,z\rrbracket/(p^2+x^p+yz)$.
	\end{enumerate}
\end{thm}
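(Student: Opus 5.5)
The plan is to treat the three rings separately, but with one common mechanism: extract from the first transition map $t_0\colon R=R_0\to R_1$ an approximate $p$-th root structure on $R$ modulo $p^2$, and show that this structure cannot be compatible with the defining equation. Concretely, the axioms of a (pre)perfectoid tower arising from $(R,(p))$ give the following data.
\begin{itemize}
\item The $p$-th power map induces a surjection $R_1/pR_1\to R/pR$ (through $t_0$).
\item There is an element $p_1\in R_1$ with $p_1^p=pu$ for a unit $u$.
\item The kernel of that Frobenius map is generated by $p_1$.
\end{itemize}
Hence for each variable $x$ there is $x_1\in R_1$ with $x=x_1^p+pa$ for some $a\in R_1$. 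Raising this to powers and using $p\mid\binom{p}{j}$ for $0<j<p$, we get congruences in $R_1$ modulo $p\,p_1$ or $p^2$ that relate the defining equation of $R$ to the $p$-th power of an equation satisfied by the $x_1$'s. This is the same device that, in the homogeneous situation, produces the Frobenius lift in Theorem \ref{main thm 1}. I will first isolate it as a lemma computing the image of $f(x_1^p+pa_1,\dots)$ in $R_1/p\,p_1R_1$.

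For (1), I would first try to reduce to Theorem \ref{main thm 1}: $R$ is homogeneous and $p$-torsionfree. But the special fiber $k\llbracket x,y,z\rrbracket/((x+y+z)^p)$ is $F$-liftable abstractly, so a bare appeal to the theorem fails. Instead I will use the finer information in its proof: the Frobenius lift must be compatible with the \emph{given} lift $R/p^2R$. In $R/p^2$ we have $(x+y+z)^p = x^p+y^p+z^p + p\,C(x,y,z) = p\,C$, where $C=\frac1p\sum\binom{p}{i,j,l}x^iy^jz^l$ over mixed terms. Using $x=x_1^p+pa$ and similarly for $y,z$, a lifted Frobenius would have to send $s:=x+y+z$ to $s^p+p\delta(s)$. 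Since $s^p=pC$ in $R/p^2$, this forces $C$ to lie in the ideal $(s)+(p)$ modulo suitable terms, i.e. the image of $C$ in $R/pR$ must be divisible by $s$ up to $p$-th powers. For $p\ge5$ I would verify, by a degree-$p$ monomial computation in $k[x,y,z]/(s)\cong k[x,y]$, that $C$ is not of this form; for instance, look at a coefficient such as that of $x^{p-2}yz$, which for $p\ge5$ is nonzero mod $p$ and cannot be absorbed. The main obstacle is pinning down exactly which ambiguity the kernel $p_1R_1$ allows, so that the contradiction is genuinely forced.

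For (2), the special fiber $k\llbracket x\rrbracket/(x^b)$ is $F$-liftable, so I will argue directly with valuations. Since $R$ is a one-dimensional domain finite over $W(k)$, I can extend a valuation $v$ to $R_1$ (passing to a minimal prime of $R_1$ over which things stay faithful). The equation gives $b\,v(x)=1+v(G)$, where $v(G)\ge\min(2v(x),1)$, hence $0<v(x)<1$. Writing $x=x_1^p+pa$ with $v(x_1)=v(x)/p$, and using that $R_1/p_1R_1\cong R/pR$ via Frobenius, the generator of $R/pR$ must be hit by $x_1$ modulo $p_1$. This forces $v(x_1)\le v(p_1)=1/p$, so $v(x)\le 1/b$-type equalities hold. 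Combining this with $b\ge3$ and $G\in(x^2,p)$, I expect to show that $x^{b-1}$ or $x^2$ would have to lie in $pR$, contradicting $R/pR\cong k\llbracket x\rrbracket/(x^b)$.

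For (3), the equation $p^2+x^p+yz=0$ gives $x^p+yz\in p^2R$, so modulo $p^2$ it is the $F$-liftable ring $W_2(k)\llbracket x,y,z\rrbracket/(x^p+yz)$. The obstruction must therefore come from $R_1$. Substituting $x=x_1^p+pa$, $y=y_1^p+pb'$, $z=z_1^p+pc$ and reducing modulo $p\,p_1$ gives $(x_1^p+y_1z_1)^p\equiv p(\ldots)$. From this I will deduce that $x_1^p+y_1z_1\in p_1R_1$, and then, taking $p$-th powers once more, that $p^2$ lies in $p^2p_1R_1$ up to a unit. This contradicts $p_1$ being a nonunit with $p\notin p\,p_1R_1$. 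The hardest step here is controlling the cross terms $p(\ldots)$ precisely enough to run this bootstrap.
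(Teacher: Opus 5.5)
You have a genuine gap in (1), and it defeats the approach. The tower axioms give no Frobenius lift of the given $R/p^2R$. Indeed, $t_0$ lands in $R_1$, where $L:=X+Y+Z$ satisfies $L^p=\pi h-p\Delta(X,Y,Z)$ with $\Delta=\delta(x+y+z)$ (your $C$ is $-\Delta$). So $L^p$ lies only in $\pi R_1$, and $t_0(s)^p\equiv L^{p^2}\equiv puh^p$, not $0$, modulo $p^2$. Applying $t_0$ to $s^p=-p\Delta$ therefore yields only $uh^p+\Delta^p\equiv 0 \pmod{pR_1}$, i.e.\ $F_0(\bar u)\,h_0^p=-\bar\Delta^p$ in $A$. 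This relation is satisfiable (e.g.\ $F_0(\bar u)=-1$, $h_0=\bar\Delta$), so nothing like ``$\bar C$ divisible by $s$ up to $p$-th powers'' is forced. The ambiguity coming from $\pi$ absorbs the whole first-order obstruction, which is why the paper says Section 3 does not rule out towers here.

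In general a tower does not yield a Frobenius lift of $R/p^2R$. For example, $W(k)\llbracket x\rrbracket/(x^2+p)$ is regular and so has a tower, but any Frobenius lift $\psi$ would give $\psi(x)^2\equiv 0\not\equiv -p=\psi(x^2)$ modulo $p^2$. A symptom of the problem is that the coefficient of $x^{p-2}yz$ in $C$ is $p-1$, nonzero for every $p$, so your argument never uses $p\ge 5$.

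The paper has to go one order further:
\begin{enumerate}
\item It expands $t_0(x^p+y^p+z^p)$ modulo $p^3$, which brings in $\Gamma=-\delta(\delta(x+y+z))$.
\item An associated-graded order argument (Lemma~\ref{lem evaluation g}, proved in the appendix) shows that, after re-choosing $Z$, the degree-$p^2$ part of $g$ lies in $k[x^p,y^p]$.
\item Hence the $M_i$-part of $\bar\Gamma(x,y,-x-y)$ is a multiple of $\mathbf v$, which becomes the identity $B=T$.
\item Over the roots of $G(t)$ this gives $\sum\beta_i=1/2$ versus $\sum\beta_i=\sum\beta_i^p=-\sum\beta_i^2=5/12$, a contradiction exactly when $p\ge 5$.
\end{enumerate}

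Parts (2) and (3) also lack the decisive step. In (2), $R$ need not be a domain or even reduced: $x^3+px^2=x^2(x+p)$ is allowed, and on its component $x=-p$ one has $v(x)=1$. Also, $bv(x)=1+v(G(x))$ with $v(G(x))\ge\min(2v(x),1)$ gives lower bounds on $v(x)$, not the upper bound you claim. More importantly, the real contradiction is an ideal-membership statement that valuations cannot see in this non-normal ring: for $x^3+p^2$ one has $v(x^2)>v(p)$ yet $x^2\notin pR$. The paper argues as follows.
\begin{enumerate}
\item The expansion gives $F_0(\bar u)h_0^p+\bar G^p=0$, so $h_0\in xA$.
\item If $\mathrm{ord}_x h_0\ge 2$, an induction gives $X^{p(b-1)}\in pR_1$, contradicting injectivity of $\bar{t_0}$ (Lemma~\ref{first reduction lemma}).
\item The case $b>p$ is ruled out by comparing $x$-orders.
\item Writing $h=Xv+\pi\alpha$, it computes $\mathrm{ord}_\pi(X^N)$ exactly (Lemma~\ref{second reduction lemma}) and compares $\pi$-adic orders of the terms of the expanded equation.
\end{enumerate}
Your sketch stops at ``I expect to show'' before any of this.

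In (3), $X^p+YZ\in\pi R_1$ is automatic, but the target ``$p^2\in p^2\pi R_1$'' cannot be reached by bounding cross terms. After re-choosing $Y,Z$ so that $X^p+YZ=\pi^2h$, the paper shows that $h$ must be a unit: otherwise $\gr_\n S\cong k[P,X,Y,Z]/(YZ)$ puts every term except $p^2$ in $\n^{2p+1}$. Then $(X^p+YZ)^p=p^2u^2h^p$ is itself a unit multiple of $p^2$ and can balance it. The contradiction lives at the lower order $p\pi^2$. Using $b,c\in(\pi,X,Y,Z)$ and $\Delta\equiv\pi^2hX^{p(p-1)} \pmod{\pi^3}$, the paper passes to $S/(Y^p,Z^p)$, divides by $p\pi^2$ and reduces modulo $\pi$. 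This gives $x^{p(p-1)}=(yz)^{p-1}=0$ in $k\llbracket x,y,z\rrbracket/(x^p+yz,y^p,z^p)$, which is false. The case $p=2$ needs a separate computation in $k[x,y,z]/(x^2+yz,y^2,z^2)$.
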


Case (1) disproves the converse of Theorem \ref{main thm 1}.
In case (3), the special fiber is even an $F$-regular rational double point. The one-dimensional argument also yields the following classification.

\begin{eg}[Corollary \ref{cor $p^a+x^b$}]
	Let $k$ be as above and $R:=W(k)\llbracket x \rrbracket/(p^a+x^b)$ for $a,b\ge 1$.
	Then $R$ admits  a perfectoid tower arising from $(R,(p))$ if and only if one of the following conditions holds.
\begin{enumerate}
	\item $a=1$ or $b=1$.
	\item $p\ge 3$ and $a=b=2$.
\end{enumerate}
\end{eg}

The paper is organized as follows.
In Section 2, we review the basic notions used in the paper.
In Section 3, we prove Theorem \ref{main thm 1}.
Sections 4--6 establish explicit nonexistence results.
Some technical details concerning Fermat hypersurfaces of degree $p$ are deferred to Appendix A.
\begin{acknowledgement}
 The author would like to express his gratitude to Kazuki Hayashi, Ryo Ishizuka and Shou Yoshikawa for valuable comments. The author used OpenAI's ChatGPT (6.1 Sol and 6 Astra) to explore examples and improve the manuscript, and takes full responsibility
for its content.
This work was supported by JSPS KAKENHI Grant Number 24KJ1040.
\end{acknowledgement}

\section{Preliminaries}
\begin{notation}
Fix a prime number $p$.
We use the following notation.
Let $A$ be a ring, $a$ be an element of $A$, and $I$ be an ideal of $A$.
\begin{enumerate}
\item Let $A^\times$ denote the set of units of $A$.
\item We say that $A$ is \textit{$a$-torsionfree} if $a$ is a non-zero-divisor on $A$.
\item We use $\bar{a}$ to denote the equivalence class of $a$ in $A/pA$.
\item For an $\N$-graded ring or its completion with respect to the irrelevant ideal, $[a]_i$ denotes the homogeneous component of $a$ of degree $i\in \N$.
\item Let $\mathrm{ord}_{I}(a):=\sup\{n\in \Z_{\ge 0}\mid a\in I^n\}$. Here $\mathrm{ord}_{I}(a)=\infty$ if $a\in I^{n}$ for any $n$. For $x\in A$, $\mathrm{ord}_{x}(a)$ denotes $\mathrm{ord}_{(x)}(a)$.
\item We write $\gr_I A:=\bigoplus_{n\ge0}I^n/I^{n+1}$.
If $r=\operatorname{ord}_I(a)<\infty$, then $\operatorname{in}_I(a)$ denotes the class of $a$ in $I^r/I^{r+1}$; otherwise, we set $\operatorname{in}_I(a)=0$.
\item We write $W(A)$ and $W_2(A)$ for the rings of $p$-typical Witt vectors of infinite length and length $2$, respectively.
\item For a perfect field $k$ of characteristic $p>0$, let $\phi$ denote the Frobenius morphism on $W(k)$. For power series rings $W(k)\llbracket x_1,\dots,x_n\rrbracket$, we use the same notation $\phi$ to denote the Frobenius lift such that $\phi(x_i)=x_i^p$. For $f\in W(k)\llbracket x_1,\dots,x_n\rrbracket$, we define
\[
	\delta(f):=\frac{\phi(f)-f^p}{p}\in W(k)\llbracket x_1,\dots,x_n\rrbracket.
\]
\end{enumerate}
\end{notation}
\subsection{Perfectoid towers}
We recall only the properties of the first step of a preperfectoid tower used below; see \cite{INS25,Hayashi26} for the definitions.

For a ring $R$, a \textit{(pre)perfectoid tower} $(\{R_i\}_{i\ge 0},\{t_i\}_{i\ge0})$ is an inductive system of rings $R_0\xrightarrow{t_0} R_1\xrightarrow{t_1}\dots$ satisfying a certain set of axioms.

Let $p$ be a prime number, and let $(R,\m)$ be a Noetherian local ring with $\operatorname{char} R/\m=p$. Suppose that $R$ is $p$-torsionfree. If $R$ admits a preperfectoid tower arising from $(R,(p))$, then the following setting is satisfied.

\begin{setting} \label{setting perfectoid tower}
	$R_1$ is a $p$-torsionfree ring, and $t_0:R\to R_1$ and $F_0:R_1/pR_1\to R/pR$ are ring homomorphisms satisfying the following conditions:
\begin{enumerate}
	\item The morphism $\bar{t_0}:R/pR\to R_1/pR_1$ induced by $t_0$ is injective.
	\item The composition $R_1/pR_1\xrightarrow{F_0} R/pR \xrightarrow{\bar{t_0}} R_1/pR_1$ is equal to the Frobenius morphism on $R_1/pR_1$.
	\item There exists an element $\pi \in R_1$ such that $pR_1=\pi^p R_1$. Let $u\in R_1^\times$ be a unit such that $\pi^p=pu$.
	\item $F_0$ is surjective, and $\Ker F_0=\pi (R_1/pR_1)$.
\end{enumerate}
\end{setting}
We will use $A$ to denote $R/pR$.

\begin{rem}
	The injectivity of $\bar{t_0}$ will only be used in Section 5. The injectivity of $t_0$ will not be used in this paper.
\end{rem}

We will not assume that $R_1$ is Noetherian, but we can easily circumvent this difficulty thanks to the following lemma.

\begin{lem} \label{lem completion of R1}
	Let $(R,\m)$ be a complete Noetherian local ring with $p\in \m$, let $R_1$ be any ring and let $\pi \in R_1$.
	Suppose that $R_1/\pi R_1\cong R/pR$ and $R_1$ is $\pi$-torsionfree. Let $\m_1$ be the contraction of $\m (R/pR)$ by the morphism $R_1\to R_1/\pi R_1\to R/pR$ and let $S$ be the completion of $R_1$ with respect to $\m_1$. Then $S$ is Noetherian and $\pi$-torsionfree, and $S/\pi \cong R/pR$.
\end{lem}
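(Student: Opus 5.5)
Throughout, write $A:=R/pR$; by hypothesis $R_1/\pi R_1\cong A$, and $A$ is Noetherian because $R$ is. The strategy is to reduce Noetherianity of $S$ to Noetherianity of $A$, using the standard criterion: a ring complete and separated for the $\pi$-adic topology whose quotient mod $\pi$ is Noetherian is itself Noetherian. So the plan has four steps: (i) describe $S$ concretely and check $S/\pi S\cong A$; (ii) show $S$ is $\pi$-adically complete and separated; (iii) show $S$ is $\pi$-torsionfree; (iv) conclude Noetherianity.

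\emph{Step (i).} Let $\n:=\m A$ and write $\m_1=(\pi)+J$, where $J\subseteq R_1$ is any set of lifts of generators $\bar a_1,\dots,\bar a_r$ of $\n$. Since $\m_1$ contains $\pi$ and is finitely generated modulo $\pi$, we have $\m_1^N\subseteq \pi R_1+J^N$. Also $(\pi,J)^{2N}\subseteq \pi^N R_1+J^N$. Hence the $\m_1$-adic topology on $R_1$ coincides with the topology defined by the ideals $\pi^N R_1+J^N$. This yields
\[
S=\varprojlim_N R_1/(\pi^N,J^N)=\varprojlim_M \varprojlim_N R_1/(\pi^M,J^N).
\]
For a fixed $M$, the ring $R_1/\pi^M R_1$ is filtered by the submodules $\pi^iR_1/\pi^{i+1}R_1$, and each of these is isomorphic to $A$ because $R_1$ is $\pi$-torsionfree. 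Since $A$ is $\n$-adically complete (it is a quotient of the complete ring $R$), an induction on $M$ shows that $R_1/\pi^M R_1$ is already $J$-adically complete; the inverse limits are exact here because of Mittag-Leffler. Consequently $S=\varprojlim_M R_1/\pi^MR_1$ is the $\pi$-adic completion of $R_1$. It then follows that $S/\pi S\cong R_1/\pi R_1\cong A$: the $\pi$-adic completion of a $\pi$-torsionfree ring is $\pi$-torsionfree, and the quotient of the completion by $\pi$ equals $R_1/\pi$. This is the standard fact for completion with respect to a principal ideal generated by a nonzerodivisor.

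\emph{Steps (ii) and (iii).} From Step (i), $S$ is $\pi$-adically complete and separated, and it is $\pi$-torsionfree. The torsionfreeness comes from the fact that the inverse system $R_1/\pi^M$ consists of surjections, together with multiplication by $\pi$ giving $R_1/\pi^{M}\hookrightarrow R_1/\pi^{M+1}$.

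\emph{Step (iv).} Every ideal $I\subseteq S$ is finitely generated. To see this, pass to the associated graded ring $\gr_{\pi}S$. Because $S$ is $\pi$-torsionfree, $\gr_\pi S\cong A[T]$, which is Noetherian. Since $S$ is $\pi$-adically complete, the standard lifting argument (as in the proof that complete filtered rings with Noetherian associated graded are Noetherian) shows that $I$ is finitely generated.

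The main obstacle is Step (i), namely showing that the $\m_1$-adic completion coincides with the $\pi$-adic completion. This identification uses the completeness of $A$, and that is exactly where the hypothesis that $R$ is complete enters. The remaining steps are standard.
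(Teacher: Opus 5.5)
Your proof is correct in substance, but it is organized differently from the paper's. The paper never identifies $S$ with the $\pi$-adic completion of $R_1$. It does the following:
\begin{itemize}
\item It takes that completion $C$ and gets $C$ Noetherian from the same criterion as your Step (iv) (Stacks Project, Tag 05GH).
\item It proves $C$ is $\pi$-torsionfree exactly as in your Step (iii).
\item It regards $S$ as the completion of the Noetherian local ring $C$ at its maximal ideal.
\end{itemize}
Noetherianity of $S$ is then automatic, and $\pi$-torsionfreeness follows from flatness of $C\to S$. Completeness of $R$ is used only at the very end, to get $S/\pi S\cong\widehat{A}=A$.

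Your order uses completeness of $A$ up front to prove $S=C$. That gives a sharper conclusion, but it puts the real work into Step (i), and there your justification is incomplete. Mittag-Leffler does make the inverse limits exact. However, to identify the limit over the subobject $\pi^{M-1}R_1/\pi^MR_1\cong A$ with $A$ itself, you need more: the filtration induced on it by $J^N(R_1/\pi^MR_1)$ must be equivalent to the $\n$-adic filtration of $A$. That is an Artin--Rees statement. It does hold, because $R_1/\pi^MR_1$ is Noetherian: it has a finite filtration whose subquotients are all isomorphic to $A$. But this has to be said explicitly.

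Alternatively, you can run Steps (ii)--(iv) for $C$ first and then argue as the paper does, which avoids the issue entirely.
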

\begin{proof}
	Let $C$ be the completion of $R_1$ with respect to $\pi R_1$. Since $R_1/\pi R_1$ is Noetherian and $\pi R_1$ is a finitely generated ideal, $C$ is Noetherian by \cite[05GH]{stacks-project}.

To see that $C$ is $\pi$-torsionfree, let $\pi x=0$. For each $n$, choose $x_n\in R_1$ representing $x$ modulo $\pi^n$. Then $\pi x_n\in\pi^{n+1}R_1$, so $x_n\in\pi^nR_1$ and hence $x=0$.

Since $C$ is Noetherian and $S$ is the completion of $C$ with respect to the maximal ideal, $S$ is also a Noetherian local ring and $\pi$-torsionfree.
Since $R$ is complete, $S/\pi\cong R/pR$.
\end{proof}

\subsection{$F$-liftability}
In this subsection, we recall the definition of (local) $F$-liftability.
\begin{defn}[{\cite[Definition 1.1]{Zdanowicz18}}]
	Let $k$ be a perfect field of characteristic $p$. A $k$-algebra $R$ is said to be \textit{$F$-liftable} if there exists a flat $W_2(k)$-algebra $S$ such that the diagram
\[
	\xymatrix{
	W_2(k) \ar[r]^{\phi} \ar[d] & W_2(k) \ar[d]\\
	S \ar[r]_{\psi} & S
	}
\]
commutes, $S\otimes_{W_2(k)}k\cong R$ and the reduction of $\psi$ modulo $p$ is $F_R$,
where $F_R$ denotes the Frobenius morphism on $R$.
\end{defn}

We will use Zdanowicz's criterion for $F$-liftability (see \cite[Lemma 4.5 and Theorem 4.6]{Zdanowicz18}).

\section{Perfectoid towers and $F$-liftability of the special fiber}
\label{section F-liftable}
In this section, we derive a necessary condition for the existence of a preperfectoid tower from its first transition map. For rings defined by homogeneous equations, this condition
implies $F$-liftability of the special fiber.

\begin{notation}
Let $k$ be a perfect field of characteristic $p>0$.
For a power series $f=\sum_{\alpha}c_\alpha x^\alpha\in W(k)\llbracket x_1,\dots,x_n \rrbracket$, we define
\[
f^{\phi^{-1}}:=\sum_{\alpha}\phi^{-1}(c_\alpha)x^\alpha.
\]
\end{notation}

\begin{prop} \label{prop first obstruction}
Let $f_1,\dots,f_m\in W(k)\llbracket x_1,\dots,x_n \rrbracket$ be elements and let  $R:=W(k)\llbracket x_1,\dots,x_n\rrbracket/(f_1,\dots,f_m)$ be the quotient ring.
Suppose that $R$ is $p$-torsionfree and admits a preperfectoid tower arising from $(R,(p))$. Then there exist a unit $v\in R$ and elements $g_i, c_j\in R$ such that
\[
	\delta(f_i) \equiv vg_i^p+\sum_{j=1}^{n} c_j \left(\frac{\partial f_i}{\partial x_j}\right)^p \pmod{pR}
\]
for $i=1,\dots, m$.
\end{prop}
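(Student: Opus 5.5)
The plan is to work entirely with the first step of the tower, as in Setting \ref{setting perfectoid tower}. We pull the relations $f_i=0$ forward along $t_0$ and divide by $p$ in $R_1$, which is $p$-torsionfree. Then we push the result back to $A=R/pR$ via $F_0$.

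First, fix lifts. Since $F_0$ is surjective, choose $y_1,\dots,y_n\in R_1$ with $F_0(\bar y_j)=\bar x_j$ in $A$. By condition (2), $\bar t_0(\bar x_j)=\bar y_j^{\,p}$, so we can write $t_0(x_j)=y_j^p+pz_j$ with $z_j\in R_1$. For the coefficients in $W(k)$, we use that $t_0$ restricted to the image of $W(k)$ is a ring map from $W(k)$. We also use that $F_0$, composed with $\bar t_0$, is Frobenius; this should force $F_0$ to act on constants by $\phi^{-1}$ relative to the structure maps. This is exactly why $g_i:=f_i^{\phi^{-1}}$ appears. If $R_1$ does not obviously carry a compatible $W(k)$-structure, we first replace $R_1$ by its completion $S$ from Lemma \ref{lem completion of R1}. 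Then $S$ is a complete Noetherian local ring with residue field $k$, so it receives a canonical map from $W(k)$, and the relevant identities can be checked there. This bookkeeping of the Frobenius twist on coefficients is the step I expect to be the main obstacle.

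Next, expand $0=t_0(f_i)=f_i\bigl(y_1^p+pz_1,\dots,y_n^p+pz_n\bigr)$ by Taylor expansion, where the coefficients are the $t_0$-images, i.e. $\phi$ of those of $g_i$. This gives
\[
0=\phi(g_i)(y)+p\sum_j \Bigl(\tfrac{\partial f_i}{\partial x_j}\Bigr)(y^p)\,z_j+p^2(\cdots),
\]
where $\phi(g_i)(y)$ means $g_i$ with coefficients hit by $\phi$ and variables replaced by $y_j^p$. This equals $g_i(y)^p+p\,\delta(g_i)(y)$. We have $F_0(\overline{g_i(y)})=\bar f_i(\bar x)=0$, so by condition (4) we get $g_i(y)=\pi h_i+p(\cdots)$ for some $h_i\in R_1$. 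Using $\pi^p=pu$, this gives $g_i(y)^p\equiv p\,u h_i^p \pmod{p^2}$; here we use $p\mid\binom{p}{k}$ together with $p\in\pi R_1$. Dividing by $p$ via $p$-torsionfreeness yields
\[
u h_i^p+\delta(g_i)(y)+\sum_j \Bigl(\tfrac{\partial f_i}{\partial x_j}\Bigr)(y^p)z_j\equiv 0 \pmod{pR_1}.
\]

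Finally, apply the ring map $F_0$ to this congruence, viewed in $R_1/pR_1$. Since $F_0(\bar y_j)=\bar x_j$ and $F_0$ twists coefficients by $\phi^{-1}$, we get $F_0(\delta(g_i)(y))=\overline{\delta(f_i)}$ and $F_0\bigl((\partial_j f_i)(y^p)\bigr)=\overline{\partial_j f_i}^{\,p}$. This uses $\delta(f^{\phi^{-1}})^{\phi}=\delta(f)$, which is immediate from the definition. Hence in $A$ we have $\overline{\delta(f_i)}=-F_0(\bar u)F_0(\bar h_i)^p-\sum_j F_0(\bar z_j)\,\overline{\partial_j f_i}^{\,p}$. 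Lift $-F_0(\bar u)$ to a unit $v\in R$; this is possible because $R$ is local and $\bar u$ is a unit, so its image is a unit. Also lift $F_0(\bar h_i)$ to $g_i\in R$ and $-F_0(\bar z_j)$ to $c_j\in R$. The $c_j$ are independent of $i$, as required, since the $z_j$ come only from $t_0(x_j)$. This gives the stated congruence.
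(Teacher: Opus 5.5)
Your proposal is correct and is essentially the paper's proof. As there, you lift $x_j$ to $X_j$ with $t_0(x_j)=X_j^p+pa_j$, write $f_i^{\phi^{-1}}(X)=\pi h_i$ in the $(\pi,X_1,\dots,X_n)$-adic completion $S$ (which is also where the power series make sense), expand $t_0(f_i)$ modulo $p^2$, divide by $p$, and apply $F_0$. The coefficient bookkeeping you flag is settled, as in the paper, by giving $S$ the $W(k)$-structure through $t_0$: conditions (1)--(2) force $F_0(\overline{t_0(c)})=\bar{c}^{\,p}$ for $c\in W(k)$, so $F_0$ twists by $\phi$ rather than $\phi^{-1}$, and this is exactly what sends $f_i^{\phi^{-1}}(X)$ and $\delta(f_i)^{\phi^{-1}}(X)$ to $\bar{f_i}=0$ and $\overline{\delta(f_i)}$.
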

\begin{proof}
	Take $\pi\in R_1$ as in Setting \ref{setting perfectoid tower}.
	For $j=1,\dots,n$, take $X_j\in R_1$ such that $F_0(\bar{X_j})=\bar{x_j}$. Since $\bar{t_0}\circ F_0$ is the Frobenius on $R_1/pR_1$, there exist $a_1, \dots, a_n \in R_1$ such that
\[
	t_0(x_j)=X_j^p+pa_j.
\]
Let $S$ be the $(\pi,X_1,\dots,X_n)$-adic completion of $R_1$. Since $\bar{f_i^{\phi^{-1}}(X_1,\dots,X_n)}$ is in $\Ker F_0:S/pS \to R/pR$ for $1\le i \le m$, there exists $h_i \in S$ such that
$f_i^{\phi^{-1}}(X_1,\dots,X_n)=\pi h_i$. By the equality $\pi^p=pu$,
	\begin{align} \label{first ob equation 1}
		f_i(X_1^p,\dots,X_n^p)=p(uh_i^p+(\delta(f_i))^{\phi^{-1}}(X_1,\dots,X_n)).
	\end{align}
On the other hand, we have
\begin{align} \label{first ob equation 2}
	0=t_0(f_i)\equiv f_i(X_1^p,\dots,X_n^p) +\sum_{j=1}^n pa_j\frac{\partial f_i}{\partial x_j}(X_1^p,\dots,X_n^p) \pmod{p^2 S}.
\end{align}
Substituting \eqref{first ob equation 1} into \eqref{first ob equation 2} and dividing by $p$, we obtain, for $1\le i \le m$,
\[
	(\delta(f_i))^{\phi^{-1}}(X_1,\dots,X_n)\equiv -uh_i^p-\sum_{j=1}^n a_j \frac{\partial f_i}{\partial x_j}(X_1^p,\dots,X_n^p) \pmod{\pi S}.
\]
Applying the isomorphism $S/\pi S\cong A$ induced by $F_0$,
we have
\[
	\bar{\delta(f_i)} = -F_0(\bar{u}) F_0(\bar{h_i})^p-\sum_{j=1}^n F_0(\bar{a_j})\left(\bar{\frac{\partial f_i}{\partial x_j}} \right)^p \in A
\]
for $1\le i \le m$.
\end{proof}

\begin{thm} \label{thm F-liftability of the closed fiber}
	Assign positive weights $\deg x_i=d_i$, let $f_1,\dots,f_m\in W(k)\llbracket x_1,\dots,x_n \rrbracket$ be homogeneous polynomials and let  $R:=W(k)\llbracket x_1,\dots,x_n\rrbracket/(f_1,\dots,f_m)$. Suppose that $R$ is $p$-torsionfree.
	If $R$ admits a preperfectoid tower arising from $(R,(p))$, then $R/pR$ is $F$-liftable.
\end{thm}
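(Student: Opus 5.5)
The plan is to combine Proposition \ref{prop first obstruction} with the grading, and then check Zdanowicz's criterion. The guiding observation is that $F$-liftability of $A=R/pR$ is tested on lifts of the form
\[
S=W_2(k)\llbracket x_1,\dots,x_n\rrbracket/(f_1+pr_1,\dots,f_m+pr_m)
\]
with the Frobenius lift $\psi(x_j)=x_j^p+pc_j'$. (Flatness of $S$ over $W_2(k)$ should follow from the $p$-torsionfreeness of $R$, since $S$ is then a deformation of $A$ obtained from the flat $W(k)$-algebra $R$ by perturbing the relations by $p$-multiples.)

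\emph{Step 1: the criterion in Taylor form.} Using $(f+pr)^p\equiv f^p\pmod{p^2}$ and $\phi(r)\equiv r^p\pmod p$, we get
\[
\delta(f_i+pr_i)\equiv \delta(f_i)+r_i^p \pmod{p}.
\]
A Taylor expansion then shows
\[
\psi(f_i+pr_i)\equiv (f_i+pr_i)^p+p\Big(\delta(f_i)+r_i^p+\sum_j c_j\big(\tfrac{\partial f_i}{\partial x_j}\big)^p\Big)\pmod{p^2}.
\]
So $\psi$ descends to $S$ exactly when, for every $i$,
\[
\delta(f_i)+r_i^p+\sum_j c_j\big(\tfrac{\partial f_i}{\partial x_j}\big)^p=0 \quad\text{in } A,
\]
up to the harmless freedom of adding $A$-combinations of the $f_k$. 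This is the form of Zdanowicz's criterion (\cite[Lemma 4.5, Theorem 4.6]{Zdanowicz18}) that I would use. Comparing with Proposition \ref{prop first obstruction}, which gives $\delta(f_i)\equiv vg_i^p+\sum_j c_j(\partial_j f_i)^p$, everything is settled once the term $vg_i^p$ can be rewritten as a $p$-th power in $A$: we would then take $r_i$ to be minus its $p$-th root and replace $c_j$ by $-c_j$.

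\emph{Step 2: use the grading.} Let $e_i=\deg f_i$. Then $\delta(f_i)$ is homogeneous of degree $pe_i$, and $(\partial_j f_i)^p$ is homogeneous of degree $p(e_i-d_j)$. The ring $A$ is the completion of a positively graded ring, so homogeneous components make sense, and I take the degree $pe_i$ component of the relation from Proposition \ref{prop first obstruction}. In characteristic $p$ we have $[g^p]_{pb}=[g]_b^p$, and $g^p$ has no components in degrees prime to $p$. Hence the relation becomes
\[
\delta(f_i)=\sum_{a\ge 0}[v]_{pa}\,[g_i]_{e_i-a}^p+\sum_j [c_j]_{pd_j}\big(\partial_j f_i\big)^p .
\]
The degree-$0$ contribution is $v_0[g_i]_{e_i}^p$, where $v_0\in k^\times$. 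Since $k$ is perfect, $v_0=w^p$ for some $w\in k$, so this contribution is the $p$-th power $(w[g_i]_{e_i})^p$. This would also make all data homogeneous, which is convenient for checking flatness of $S$.

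\emph{Step 3, the main obstacle: the cross terms $[v]_{pa}[g_i]_{e_i-a}^p$ with $a>0$.} These are not visibly $p$-th powers, so the difficulty is to get rid of them. The first thing I would try is to go back into the proof of Proposition \ref{prop first obstruction} and exploit the freedom there. The elements $X_j$ are only determined modulo $\Ker F_0=\pi(S/pS)$, and $h_i$ comes from $f_i^{\phi^{-1}}(X)=\pi h_i$. Using the weight $\mathbb{G}_m$-action $x_j\mapsto\lambda^{d_j}x_j$, with $\lambda$ ranging over Teichm\"uller lifts, I would try to show that $h_i$, and therefore $g_i=-F_0(\bar h_i)$, can be arranged to have only the component of degree $e_i$. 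Then only the $a=0$ term survives. Failing that, I would absorb the cross terms into the $c_j$-terms via the Euler relation $\sum_j d_jx_j\,\partial_j f_i=e_if_i=0$ in $A$, or into the ideal $(f_1,\dots,f_m)$. Once the $a>0$ terms are disposed of, set $r_i:=-w[g_i]_{e_i}$ lifted to $W(k)\llbracket x\rrbracket$, together with homogeneous lifts of the $-[c_j]_{pd_j}$, and Step 1 gives the $F$-lift.
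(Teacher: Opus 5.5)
Your overall route (Proposition \ref{prop first obstruction}, then homogeneous components, then a lift $\tilde R=W_2(k)\llbracket x_1,\dots,x_n\rrbracket/(f_1+pr_1,\dots,f_m+pr_m)$ checked against Zdanowicz's criterion) is the paper's, and your Step 1 computation is fine. There are two genuine gaps.

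\textbf{Step 3 is left open, though the obstacle disappears with a simple normalization.} Multiply the congruence of Proposition \ref{prop first obstruction} by $v^{-1}$ \emph{before} taking components. Since $\delta(f_i)$ is homogeneous of degree $pe_i$ and $[g_i^p]_{pe_i}=[g_i]_{e_i}^p$ in $A$, the degree-$pe_i$ part reads
\[
[v^{-1}]_0\,\delta(f_i)=[g_i]_{e_i}^p+\sum_{j=1}^n[v^{-1}c_j]_{pd_j}\Bigl(\frac{\partial f_i}{\partial x_j}\Bigr)^p\quad\text{in }A,
\]
with no cross terms at all. Write $[v^{-1}]_0=\lambda^{-p}$ with $\lambda\in k^\times$. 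Then take $r_i$ a homogeneous lift of $-\lambda[g_i]_{e_i}$ and $c_j'$ a lift of $-\lambda^p[v^{-1}c_j]_{pd_j}$. The torus-action and Euler-relation detours are unnecessary. The former also has no evident footing, since the tower carries no compatible $\mathbb{G}_m$-action.

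\textbf{Flatness of $\tilde R$ over $W_2(k)$ does \emph{not} follow from $p$-torsionfreeness of $R$.} This is the more serious gap. For instance, $W(k)\llbracket x,y,z\rrbracket/(xy,xz)$ is $p$-torsionfree and defined by homogeneous equations. Yet with $r_1=z^2$ and $r_2=0$ one has $z(xy+pz^2)-y\cdot xz=pz^3$ while $z^3\notin(p,xy,xz)$, so $\tilde R$ has $p$-torsion not in $p\tilde R$.

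Since syzygies of $(\bar f_i)$ lift to syzygies of $(f_i)$ when $R$ is $p$-torsionfree, flatness of $\tilde R$ is equivalent to the following: $\sum_i b_i\bar f_i=0$ in $k\llbracket x_1,\dots,x_n\rrbracket$ implies $\sum_i b_i\bar r_i=0$ in $A$. This is where the paper uses the tower a second time, and it is the bulk of its proof:
\begin{enumerate}
\item lift a syzygy to $\sum_iB_if_i=0$ over $W(k)$;
\item twist coefficients by $\phi^{-1}$ and evaluate at the $X_j$ to get $0=\pi\sum_iB_i^{\phi^{-1}}(X_1,\dots,X_n)h_i$;
\item cancel $\pi$ and apply $F_0$ to obtain $\sum_ib_ig_i=0$ in $A$;
\item take the degree-$l$ component of a homogeneous syzygy of degree $l$ to get $\sum_ib_i[g_i]_{e_i}=0$.
\end{enumerate}
This requires $g_i$ to be the specific elements $F_0(\bar h_i)$ from the proof of Proposition \ref{prop first obstruction}, not arbitrary elements satisfying its congruence. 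So that proposition cannot be used as a black box here.
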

\begin{proof}
By Proposition \ref{prop first obstruction},
there exist a unit $v\in R$ and elements $g_i,c_j\in R$ such that
\begin{align} \label{first obstruction 1}
	\delta(f_i) \equiv vg_i^p+\sum_{j=1}^{n} c_j \left(\frac{\partial f_i}{\partial x_j}\right)^p \pmod{pR}
\end{align}
for $i=1,\dots, m$. We choose $g_i\in R$, lifting $F_0(\bar{h_i})$, with the notation as in the proof of Proposition \ref{prop first obstruction}.

Let $e_i:=\deg f_i$ for $1\le i\le m$.
Multiplying by $v^{-1}$ and taking the $pe_i$-th homogeneous part of (\ref{first obstruction 1}), we obtain
\[
	[v^{-1}]_0\delta(f_i)\equiv [g_i]_{e_i}^p+\sum_{j=1}^n[v^{-1}c_j]_{pd_j}\left(\frac{\partial f_i}{\partial x_j}\right)^p \pmod{pR}.
\]
Note that $[\bar{v}^{-1}]_0\in k^\times$ has a $p$-th root $\lambda^{-1}$ since $k$ is perfect.

Let $T:=W(k)\llbracket x_1,\dots,x_n \rrbracket$, choose a lift $G_i\in T$ of $-\lambda[g_i]_{e_i}\in A$, let $\tilde{f_i}:=f_i+pG_i$ for $i=1,\dots,m$, and let $\tilde{R}:=W_2(k)\llbracket x_1,\dots,x_n \rrbracket/(\tilde{f_1},\dots,\tilde{f_m})$. $\tilde{R}$ has a Frobenius lift by \cite[the proof of Lemma 4.5 and the subsequent argument]{Zdanowicz18}.

Therefore, it is enough to show that $\tilde{R}$ is flat over $W_2(k)$.

\begin{cl}
	For $b_i\in T/pT$, suppose that $\sum_{i=1}^m b_i\bar{f_i}=0 \in T/pT.$
	Then $\sum_{i=1}^mb_i\bar{g_i}=0\in R/pR.$
\end{cl}
\begin{clproof}
	Let $\tilde{b_i}\in T$ be lifts of $b_i$. Then $\sum_{i=1}^m \tilde{b_i}f_i\in pT.$
	Since $R$ is $p$-torsionfree, there exist $b_i'\in T$ such that	$\sum_{i=1}^m\tilde{b_i}f_i=p(\sum_{i=1}^mb_i'f_i).$
	Hence, $\sum_{i=1}^m(\tilde{b_i}-pb_i')f_i=0.$
	Let $B_i=\tilde{b_i}-pb_i'\in T$. Then we obtain
	\begin{align*}
		0&=\sum_{i=1}^m (B_i^{\phi^{-1}}f_i^{\phi^{-1}})(X_1,\dots,X_n) \\
		&= \pi \sum_{i=1}^m B_i^{\phi^{-1}}(X_1,\dots,X_n)h_i.
	\end{align*}
	Dividing by $\pi$ and applying $S/\pi S \xrightarrow{F_0} R/pR$, we get $\sum_{i=1}^m b_ig_i=0.$
\end{clproof}

Applying the claim degree by degree, we see that
\[
  \sum_{i=1}^m b_i\bar{f_i}=0 \text{ in } T/pT
  \quad\Longrightarrow\quad
  \sum_{i=1}^m b_i\bar{G_i}=0 \text{ in } A.
\]
Indeed, for a homogeneous relation of degree $l$, taking the degree-$l$ component of $\sum_i b_i g_i=0$ gives $\sum_i b_i[g_i]_{e_i}=0$; the general case follows by decomposing the relation into homogeneous components.

Let $\alpha\in T$ be such that $p\alpha=0 \in \tilde{R}.$
Take $\beta_i\in T$ such that
\[
  p\alpha\equiv\sum_{i=1}^m\beta_i\tilde{f_i}
  \pmod{p^2T}.
\]
Reducing modulo $p$ gives
$\sum_i\bar{\beta_i}\bar{f_i}=0$, so $\sum_i\bar{\beta_i}\bar{G_i}=0$ in $A$.
Passing to $R$, we have
\[
	p(\alpha-\sum_{i=1}^m\beta_iG_i)\in p^2R.
\]
Since $R$ is $p$-torsionfree, we obtain
\[
  \alpha\equiv\sum_{i=1}^m\beta_iG_i
  \equiv0\pmod{pR}.
\]
Thus $\alpha\in(f_1,\dots,f_m,p)T=(\tilde{f_1},\dots,\tilde{f_m},p)T$.
Hence, $\tilde R$ is flat over $W_2(k)$.
\end{proof}

\begin{eg}
	Let $k$ be an algebraically closed field of characteristic
$p>0$, and assign positive weights $\deg x_i=d_i$. Let $f_1,\dots,f_m$ be homogeneous elements in $W(k)[x_1,\dots,x_n]$ and let $R:=W(k)\llbracket x_1,\dots,x_n\rrbracket /(f_1,\dots,f_m)$. Suppose that $R$ is a $p$-torsionfree complete intersection of dimension $d\ge 4$ and that $R/pR$ has an isolated singularity.

If $R$ admits a preperfectoid tower arising from $(R,(p))$, then $R/pR$ is $F$-liftable by Theorem \ref{thm F-liftability of the closed fiber}. By \cite[Theorem A]{Kawakami-Takamatsu26}, $R/pR$ is regular, or $\dim R/pR=3$ and
\[
	R/pR\cong k\llbracket x,y,z,w\rrbracket/(xy-zw).
\]
In the latter case, homogeneity gives a presentation
\[
  R\cong W(k)\llbracket x,y,z,w\rrbracket/(xy-zw+pf)
\]
with $f\in(x,y,z,w)^2W(k)\llbracket x,y,z,w\rrbracket$.
A formal change of coordinates over $W(k)$ eliminates $pf$.
Consequently, $R$ is regular, or $d=4$ and
\[
  R\cong W(k)\llbracket x,y,z,w\rrbracket/(xy-zw).
\]
\end{eg}

\section{Fermat hypersurfaces of degree $p$} \label{section second obstruction}
	The obstruction in Section \ref{section F-liftable} does not rule out perfectoid towers when the special fiber is $F$-liftable. In this section, we derive a further obstruction in the case of $W(k)\llbracket x,y,z\rrbracket/(x^p+y^p+z^p)$.

\begin{thm} \label{thm $x^p+y^p+z^p$}
	Let $k$ be a perfect field of characteristic $p$ and let $R:=W(k)\llbracket x,y,z \rrbracket/(x^p+y^p+z^p)$.
	If $p\ge 5$, then $R$ does not admit a preperfectoid tower arising from $(R,(p))$.
\end{thm}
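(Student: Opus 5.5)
The plan is to push the argument of Proposition~\ref{prop first obstruction} one order further. For $f=x^p+y^p+z^p$ the first obstruction is vacuous. Indeed, $\partial f/\partial x_j=px_j^{p-1}\equiv 0\pmod p$, and modulo $p$
\[
\delta(f)\equiv Q(x^p,y^p,z^p)=Q(x,y,z)^p,\qquad Q:=\frac{x^p+y^p+z^p-(x+y+z)^p}{p},
\]
where $Q$ has coefficients in $\F_p$. So the congruence of Proposition~\ref{prop first obstruction} holds with $v=1$ and $g=Q$, and the special fiber $A=k\llbracket x,y,z\rrbracket/(l^p)$, $l:=x+y+z$, is $F$-liftable. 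The contradiction therefore has to come from higher-order information in $p$.

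\textbf{Setup and first consequence.} Work as in Proposition~\ref{prop first obstruction}, using Lemma~\ref{lem completion of R1}. Take $X,Y,Z\in R_1$ with $F_0$-images $x,y,z$, and write $t_0(x)=X^p+pa$, and similarly for $y,z$. Pass to the completion $S$, which is Noetherian, $\pi$-torsionfree, and satisfies $S/\pi S\cong A$. There we have $f(X,Y,Z)=\pi h$. Since every partial derivative of $f$ is divisible by $p$, expanding $t_0(f)=0$ gives the stronger congruence
\[
f(X^p,Y^p,Z^p)\equiv 0\pmod{p^2S}.
\]
The correction terms $p^2(X^{p(p-1)}a+\cdots)$ are explicit modulo $p^3$, and I would keep them.

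\textbf{Comparing two expressions.} The identity $f(U)=(U_1+U_2+U_3)^p+pQ(U)$ gives
\[
f(X^p,Y^p,Z^p)=f(X,Y,Z)^p+pQ(X^p,\dots)=\pi^ph^p+pQ(X^p,Y^p,Z^p)=p\bigl(uh^p+Q(X^p,Y^p,Z^p)\bigr).
\]
Hence $uh^p\equiv -Q(X^p,Y^p,Z^p)\pmod{pS}$. Combined with $Q(X^p,\dots)\equiv Q(X,\dots)^p \pmod p$, this gives $uh^p\equiv -Q(X,Y,Z)^p$ modulo $\pi^p$. In addition, $\pi h=L^p+pQ(X,Y,Z)$ with $L:=X+Y+Z$, so $h\equiv L^p/\pi\pmod{\pi^{p-1}}$. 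Here $L^p\in\pi S$, because $F_0(\bar L)=l$ and the images of $L^p$ lie in $\Ker F_0$. Eliminating $h$ between these two relations should yield an identity of the form
\[
L^{p^2}\equiv -\pi^{p}u^{-1}\cdot(\text{unit})\cdot Q(X,Y,Z)^p\pmod{\pi^{p+1}S},
\]
possibly after also using the $p^2$-order correction terms from $t_0(f)=0$. Dividing by the appropriate power of $\pi$ and applying $F_0\colon S/\pi S\cong A$ should then give a concrete equation in $A$. This equation should relate $l$-powers, the images $\bar a,\bar b,\bar c$ of the correction terms, a unit, and $Q(x,y,z)^p$ or $Q$ itself.

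\textbf{Expected main obstacle.} The hard step is showing that this final equation in $A=k\llbracket x,y,z\rrbracket/(l^p)$ is impossible for $p\ge 5$. I would first work in the associated graded ring $\gr_{\m}A=k[x,y,z]/(l^p)$ and compare initial forms, using that all the relations are weighted homogeneous in $X,Y,Z$ up to the unknown elements $a,u$. The relevant fact is that $Q$, which is homogeneous of degree $p$, is not in $(l^{p-1})$ modulo $l^p$. More precisely, $Q\equiv -\sum\frac1p\binom{p}{\alpha}x^{\alpha_1}y^{\alpha_2}z^{\alpha_3}$ contains monomials such as $x^{p-1}y$ whose coefficients cannot be matched by terms of the form $l^{j}\cdot(\text{stuff})$ with $j$ large. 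The degree bookkeeping should force one side to have initial degree strictly different from the other. I expect $p\ge5$ to enter exactly here: for $p=2,3$ the degrees $p$, $2p$, $p^2$ collide and small cases survive. If the graded comparison alone is insufficient, I would reduce further modulo $l$, i.e.\ to $k\llbracket x,y\rrbracket$ after substituting $z=-x-y$, and check explicitly that $Q(x,y,-x-y)$ is not a $p$-th power times a unit there. The terms involving $a,b,c$ must be shown to land in too high an $\m$-adic order to compensate.
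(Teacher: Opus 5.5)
There is a genuine gap. Every relation you actually derive is first-order, and these relations are consistent, so the contradiction your last paragraph aims for does not exist. Concretely, $uh^p\equiv -Q(X,Y,Z)^p \pmod{\pi^p}$ together with $\pi h=L^p+pQ(X,Y,Z)$ yields, after applying $F_0$, only the relation $F_0(\bar u)\,h_0^p=-Q(x,y,z)^p$ in $A$. Your displayed congruence for $L^{p^2}$ is this same relation multiplied by $\pi^p$, since $L^{p^2}=(\pi h-pQ)^p\equiv \pi^p h^p\pmod{p^2}$. This is exactly the first obstruction, which you yourself showed is vacuous. Modulo $l$ it only says that $Q(x,y,-x-y)=v\,\rho(\bar h)$ for a unit $v$ of the normal ring $k\llbracket x,y\rrbracket$ with $v^p=-\rho(u)$, and the paper uses it in precisely this way. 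The right-hand side is already a $p$-th power. So neither checking that $Q(x,y,-x-y)$ is not a unit times a $p$-th power, nor comparing initial degrees in $k[x,y,z]/(l^p)$, can give a contradiction. Your expectation that the $a,b,c$-terms have too high $\m$-adic order is also false. In the relevant degree $p^2$, the term $z^{p(p-1)}F_0(\bar c)$ reduces modulo $l$ to $(x+y)^{p(p-1)}[\rho(\bar c)]_p$, and this does hit the monomials $M_i=x^{p^2-1-ip}y^{1+ip}$.

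The missing idea is a genuinely second-order obstruction. The paper expands $t_0(f)=0$ modulo $p^3$ and keeps $D=X^{p(p-1)}a+Y^{p(p-1)}b+Z^{p(p-1)}c$. It uses $\Delta(X^p,Y^p,Z^p)=\Delta^p-p\Gamma$, where $\Delta=\delta(x+y+z)$ is your $Q$ and $\Gamma=-\delta(\delta(x+y+z))$. This gives $\bar\Gamma=F_0(\bar G)+x^{p(p-1)}F_0(\bar a)+y^{p(p-1)}F_0(\bar b)+z^{p(p-1)}F_0(\bar c)$ in $A$, where $G=(uh^p+\Delta^p)/p$.

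The hard step, which has no analogue in your plan, is Lemma~\ref{lem evaluation g}. It says that after modifying $Z$ by a multiple of $\pi$, the degree-$p^2$ part of $\rho(\bar G)$ lies in $k[x^p,y^p]$. Its proof needs three ingredients:
\begin{enumerate}
\item the presentation $S\cong T/(F)$ with $\mathrm{in}_\n(F)=L^p$;
\item the estimate $h\in\m_S^p$;
\item the carry statement of Corollary~\ref{cor carry}.
\end{enumerate}

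Granting this lemma, the unknowns contribute to the $M_i$ only through the line $k\mathbf v$. The contradiction is then arithmetic: the $M_i$-coefficients of $\bar\Gamma(x,y,-x-y)$ are not proportional to $(-1)^i$. The paper proves this by showing that proportionality would force every root $\beta$ of $\sum_{i=0}^{p-2}(-1)^it^i/(i+1)$ to satisfy $\beta^p=-\beta^2$. That contradicts the power sums $\sum\beta_i=1/2$ and $\sum\beta_i^2=-5/12$. This is where $p\ge5$ enters, since the discrepancy is $1/12$; it does not come from a collision of the degrees $p$, $2p$, $p^2$.
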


We first derive a necessary condition on polynomial coefficients for the existence of a preperfectoid tower, and then show that it leads to a contradiction.
The proof of Lemma \ref{lem evaluation g} is deferred to Appendix \ref{appendix homogeneous component}.

\subsection{A constraint on the coefficients} \label{subsection coefficient obstruction}

\begin{setting}
	Let $p\ge 3$ be a prime number, let $k$ be a perfect field of characteristic $p$, and let $R:=W(k)\llbracket x,y,z \rrbracket/(x^p+y^p+z^p)$.
\end{setting}
	Suppose that $R$ admits a preperfectoid tower arising from $(R,(p))$. Fix $X,Y,Z\in R_1$ such that $F_0(\bar{X})=\bar{x}$, $F_0(\bar{Y})=\bar{y}$ and $F_0(\bar{Z})=\bar{z}$ in $A$. Take $a,b,c\in R_1$ such that
\[
	t_0(x)=X^p+pa, \quad t_0(y)=Y^p+pb \text{, and } t_0(z)=Z^p+pc.
\]
	We obtain
\begin{align} \label{second ob expansion eq}
	0=t_0(x^p+y^p+z^p) \equiv X^{p^2}+Y^{p^2}+Z^{p^2}+p^2D \pmod{p^3 R_1},
\end{align}
where
\[
	D:=X^{p(p-1)}a+Y^{p(p-1)}b+Z^{p(p-1)}c.
\]
Since $F_0(\bar{X^p+Y^p+Z^p})=0$, there exists $h\in R_1$ such that
\[
	X^p+Y^p+Z^p=\pi h.
\]
Let $L:=X+Y+Z$ and let $(S,\m_S)$ be the completion of $R_1$ with respect to $(\pi,X,Y,L)R_1$. By Lemma \ref{lem completion of R1}, $S$ is a complete Noetherian local ring and is $\pi$-torsionfree. Moreover,
\[
	S/\pi S\cong R_0/pR_0\cong k\llbracket x,y,l\rrbracket/(l^p),
\]
where $l:=x+y+z$. The $W(k)$-algebra structure on $S$ is induced by $W(k)\to R\xrightarrow{t_0}S$. Under the above identification, its reduction on coefficients is the Frobenius automorphism of $k$. Thus, for $a\in W(k)$, both $F_0$ and $\rho$ below send the class of $t_0(a)$ to $\bar a^p$.

Define the polynomial $\Delta(x,y,z)$ by
\[
	\Delta(x,y,z):=\delta(x+y+z)\in \Z_p[x,y,z].
\]
We use $\Delta$ to denote $\Delta(X,Y,Z)\in S$.
 Let  $C:=A/(l)$ and  let $\rho: S/pS\xrightarrow{F_0} A\to C$ be the quotient morphism.

Using $\pi^p=pu$ and dividing (\ref{second ob expansion eq}) by $p$, we obtain
\begin{align} \label{second ob expansion D}
	uh^p+\Delta(X^p,Y^p,Z^p)+pD \equiv 0 \pmod{p^2R_1}.
\end{align}

Define the polynomial $\Gamma(x,y,z)$ by
\[
	\Gamma(x,y,z):=-\delta(\delta(x+y+z)) \in \Z_p[x,y,z].
\]

Hence,
\begin{align} \label{second ob eq}
	uh^p+\Delta^p+p(-\Gamma(X,Y,Z)+D)\equiv 0 \pmod{p^2S}.
\end{align}
Let
\[
	G:=\frac{uh^p+\Delta^p}{p}\in S.
\]
Dividing (\ref{second ob eq}) by $p$, reducing modulo $p$ and then applying $F_0$,
we obtain
\begin{align} \label{second ob eq 2}
	\bar{\Gamma(x,y,z)}= F_0(\bar{G})+\bar{x}^{p(p-1)}F_0(\bar{a})+\bar{y}^{p(p-1)}F_{0}(\bar{b})+\bar{z}^{p(p-1)}F_0(\bar{c}) \in A.
\end{align}

We introduce the following notation:
\begin{enumerate}
\item Set $g:=\rho(\bar{G}) \in C$.
\item Let $M_i=x^{p^2-1-ip}y^{1+ip}\in k[x,y]$ for $1\le i \le p-2$. We define $\mathbf{v}:=\sum_{i=1}^{p-2}(-1)^iM_i$.
\item Let $\gamma(x,y):=\bar{\Gamma(x,y,-x-y)}\in \F_p[x,y]\subseteq k[x,y]$ and let $\gamma_M(x,y)$ denote the sum of the terms of $\gamma(x,y)$ whose monomial parts are among the $M_i$.
\end{enumerate}
\begin{lem} \label{lem evaluation g}
After suitably changing the lift $Z\in S$ of $z$ and redefining the associated quantities, the $p^2$-th homogeneous component $[g]_{p^2}$ of $g$ belongs to $k[x^p,y^p]$.
\end{lem}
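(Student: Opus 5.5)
The idea is to exploit the freedom in choosing the lift $Z$: replacing $Z$ by $Z+\pi\epsilon$ with $\epsilon\in S$ keeps $F_0(\bar Z)=\bar z$. It changes $h$ by a multiple of $p-1$-th power terms in $X^p+Y^p+Z^p=\pi h$, namely $h\mapsto h+pZ^{p-1}\epsilon+\dots$ modulo $\pi^{p-1}$-divisible corrections. More usefully, it lets us move $h$ within a whole coset modulo $\pi$. Concretely, I would first read off the reduction of the defining identity. Since $uh^p+\Delta^p=pG\in pS$, reducing modulo $p$ and applying $F_0$ (which kills $\pi$) gives
\[
F_0(\bar u)\,F_0(\bar h)^p=-F_0(\bar\Delta)^p \quad\text{in } A,
\]
and hence in $C=k\llbracket x,y\rrbracket$, where $\rho(\bar\Delta)=\bar\Delta(x,y,-x-y)$ is homogeneous of degree $p$. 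Writing $\rho(\bar u)=\mu$ with constant term $\mu_0\in k^\times$, I take $\lambda\in k^\times$ with $\lambda^p=\mu_0$ (possible since $k$ is perfect). Lift $\lambda$ via $W(k)\to S$.

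The key step is to arrange, after modifying $Z$, that
\[
h=-\lambda^{-1}\Delta+\pi h' \quad\text{for some } h'\in S.
\]
For this I would compare $F_0(\bar h)$ with $-\lambda^{-1}F_0(\bar\Delta)$ in $A$. The displayed relation together with degree considerations in $C$ forces them to agree in the relevant low degrees. I then absorb the discrepancy, which lies in $\Ker F_0=\pi(S/pS)$ up to higher-degree terms, into $Z$ by the substitution $Z\mapsto Z+\pi\epsilon$, recomputing $h$, $a,b,c$, $D$ and $G$.

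With $h$ in this form, expand
\[
u h^p=u\bigl(-\lambda^{-1}\Delta+\pi h'\bigr)^p .
\]
The mixed binomial terms carry $\binom{p}{j}\pi^j$ with $1\le j\le p-1$. After division by $p$ they still contain a factor $\pi$, so $\rho$ kills them. The last term is $u\pi^p h'^p=pu^2h'^p$, which contributes $\rho(\bar u)^2\rho(\bar h')^p$ to $g$. The remaining piece is $(1-u\lambda^{-p})\Delta^p/p$ with $p=3$-independent sign bookkeeping; here $p$ odd gives $(-1)^p=-1$.

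Taking the degree-$p^2$ component, the term $\rho(\bar u)^2\rho(\bar h')^p$ contributes to $[g]_{p^2}$ only through products of homogeneous pieces of $\rho(\bar u)^2$ with $p$-th powers. I would show that only the degree-$0$ part of $\rho(\bar u)$ can pair with $[\rho(\bar h')]_p^p$. This needs a degree bound showing that $\rho(\bar h')$ has no components below degree $p$; I would derive it from the lowest-degree part of the relation above. That contribution is then in $k[x^p,y^p]$.

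The main obstacle is the term $(1-u\lambda^{-p})\Delta^p/p$. The element $1-u\lambda^{-p}$ is only known to lie in the maximal ideal, not in $pS$, so this quotient is not a priori meaningful term by term. To handle it, I would not split $G$ this way. Instead I would use the freedom in $\lambda$ and in the choice of $u$: one may replace $\pi$ by $\pi\cdot w$ for a unit $w$, which changes $u$ by $w^p$. The aim is to normalize $u$ so that $u\equiv \lambda^p \pmod{\pi^p S}$ up to terms of positive degree. Terms of positive degree in $u$ multiply $\Delta^p$, whose image $\rho(\bar\Delta)^p$ has degree exactly $p^2$, and so land in degree $>p^2$. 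Terms in $\pi^p S=pS$ give contributions of the form $(\text{unit})\cdot\rho(\bar\Delta)^p$, which are $p$-th powers. Checking that this normalization is compatible with the previous change of $Z$ is the delicate point; the detailed verification is what Appendix A must carry out. The conclusion is that $[g]_{p^2}\in k[x^p,y^p]$.
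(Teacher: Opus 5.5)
There is a genuine gap. Your normalization is not something you can arrange, and the term you call the main obstacle is left unresolved.

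\textbf{The normalization $h=-\lambda^{-1}\Delta+\pi h'$.} It cannot be produced by changing lifts. Any other lift of $z$ is $Z+\pi e$, and $(Z+\pi e)^p-Z^p\in p\pi S+\pi^pS$. So $h$ changes only by an element of $(p,\pi^{p-1})S\subseteq\pi S$, as your own opening sentences indicate, and $\Delta(X,Y,Z)$ is unchanged modulo $\pi$; the same holds for $X$ and $Y$. Hence $F_0(\bar h)$ and $F_0(\bar\Delta)$ do not depend on any choice, and your normalization is a claim that would have to be proved.

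It does not follow from $F_0(\bar u)F_0(\bar h)^p=-F_0(\bar\Delta)^p$. Since $l^p=0$ in $A$, a $p$-th power in $A$ depends only on the image in $C$. So this relation yields exactly $\rho(\bar\Delta)=v\rho(\bar h)$ with $v\in C^\times$ (not known to be constant) and $\rho(\bar u)=-v^p$. It says nothing about the components of $F_0(\bar h)$ and $F_0(\bar\Delta)$ along $l,\dots,l^{p-1}$.

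Controlling exactly these components in degree $p$ is the core of the paper's proof. It sets $\epsilon:=\Delta-v_1h$ for a unit $v_1$ lifting $v$, and Lemma~\ref{lem evaluation epsilon} shows $F_0(\bar\epsilon)\in(x,y,l)^{p+1}A$. The argument passes to $S/(h^p)$ and compares initial forms in $\gr_{(x,y)}(C/(H^p))\cong k[x,y]/([H]_p^p)$. It then uses that $[H]_p$, a nonzero multiple of $\Delta(x,y,-x-y)$, has $x$-adic order $1$. Your proposal has nothing that replaces this step.

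\textbf{The residual term.} The fix you suggest for $(1-u\lambda^{-p})\Delta^p/p$ does not work. Rescaling $\pi$ by a unit $w$ multiplies $u$ by $w^p$, whose class modulo $pS=\pi^pS$ is determined by $w$ modulo $\pi$. There is no reason this makes $u$ congruent to a constant modulo $\pi^pS$, and you do not check it. Moreover, a term of positive $\m_S$-adic order times $\Delta^p$ has order $\ge p^2+1$. After dividing by $p$, which has order $p$, you only get order $\ge p^2+1-p$, even granting divisibility.

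What is missing is controlled division by $\pi^p$, which the paper obtains in two steps:
\begin{enumerate}
\item The lift change actually used is $Z\mapsto Z+\tilde\lambda\pi$, with a constant $\lambda\in k$ read off from the initial forms of lifts of $t_0(x),t_0(y),t_0(z)$ in $T$. It makes $\mathrm{in}_\n(Q)=L^{p^2}$, hence $\gr_{\m_S}S\cong k[P,X,Y,L]/(L^p)$ and $h\in\m_S^p$ (Lemma~\ref{tangent cone lemma}).
\item Consequently $P$ is a nonzerodivisor on $\gr(S/(h^p))$ (Corollary~\ref{cor carry}).
\end{enumerate}
Write $\epsilon=Lr+\pi s$ with $r\in\m_S^p$, and set $V:=(u+v_1^p)h^p+L^pr^p\in pS$. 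Then $V/\pi^p\in(h^p,\m_S^{p^2+1})$, so $[\rho(V/\pi^p)]_{p^2}$ is a scalar multiple of $[H]_p^p$. The identity $g=-v^p\rho(V/\pi^p)-(v\rho(s))^p$ then gives the lemma.

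A minor point: your separate degree bound on $\rho(\bar h')$ is unnecessary. Since $\rho(\bar u)=-v^p$ is a $p$-th power, $\rho(u^2h'^p)$ is already a $p$-th power.
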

The proof is given in Appendix \ref{appendix homogeneous component}. We henceforth use the lifts supplied by this lemma.

\begin{prop} \label{secondary bridge lemma}
	With notation as above, $\gamma_M(x,y)\in k \mathbf{v}$.
\end{prop}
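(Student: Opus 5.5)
The plan is to push equation \eqref{second ob eq 2} into $C=A/(l)\cong k\llbracket x,y\rrbracket$ and compare coefficients of the monomials $M_i$ in degree $p^2$. Under $A\to C$ we have $\bar z\mapsto -x-y$, so the left side of \eqref{second ob eq 2} becomes $\gamma(x,y)=\bar{\Gamma(x,y,-x-y)}$. Since $\delta(x+y+z)$ is homogeneous of degree $p$ (with $\deg x=\deg y=\deg z=1$), $\Gamma=-\delta(\Delta)$ is homogeneous of degree $p^2$, and hence so is $\gamma$. Write $\alpha,\beta,\epsilon\in C$ for the images of $F_0(\bar a),F_0(\bar b),F_0(\bar c)$, and let $\alpha_p,\beta_p,\epsilon_p$ be their degree-$p$ components. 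Taking the degree-$p^2$ part of the image of \eqref{second ob eq 2} in $C$ gives
\[
\gamma=[g]_{p^2}+x^{p(p-1)}\alpha_p+y^{p(p-1)}\beta_p+(-1)^{p(p-1)}(x+y)^{p(p-1)}\epsilon_p .
\]
This uses only that $C$ is the completion of a standard graded ring, so that multiplication by a homogeneous form shifts degrees.

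Next I compare the coefficients of each $M_i=x^{p^2-1-ip}y^{1+ip}$, $1\le i\le p-2$, term by term.
\begin{enumerate}
\item By Lemma \ref{lem evaluation g}, $[g]_{p^2}\in k[x^p,y^p]$. The $y$-exponent $1+ip$ of $M_i$ is not divisible by $p$, so $[g]_{p^2}$ contributes nothing.
\item Every monomial of $x^{p(p-1)}\alpha_p$ has $x$-exponent at least $p^2-p$. But $p^2-1-ip\ge p^2-p$ forces $ip\le p-1$, i.e.\ $i=0$, which is excluded. So this term contributes nothing.
\item Similarly, every monomial of $y^{p(p-1)}\beta_p$ has $y$-exponent at least $p^2-p$, while $1+ip\le 1+(p-2)p<p^2-p$. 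So this term contributes nothing either.
\end{enumerate}
Hence $\gamma_M$ equals the $M$-part of $(x+y)^{p(p-1)}\epsilon_p$.

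In characteristic $p$ we have $(x+y)^{p(p-1)}=(x^p+y^p)^{p-1}=\sum_{j=0}^{p-1}\binom{p-1}{j}x^{pj}y^{p(p-1-j)}$, and $\binom{p-1}{j}\equiv(-1)^j \pmod p$. Write $\epsilon_p=\sum_{s=0}^p e_s x^s y^{p-s}$. A product monomial is $x^{pj+s}y^{p(p-1-j)+p-s}$. Its $y$-exponent is $\equiv 1 \pmod p$ only when $s=p-1$, and it then equals $M_i$ exactly when $j=p-1-i$. Each $M_i$ therefore has coefficient $e_{p-1}(-1)^{p-1-i}=e_{p-1}(-1)^i$, using that $p$ is odd. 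This gives $\gamma_M=e_{p-1}\mathbf v\in k\mathbf v$, which is the claim.

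I expect the only delicate point to be bookkeeping rather than any real obstacle. First, $F_0$ and $\rho$ twist scalars by Frobenius, but $\gamma$ has coefficients in $\F_p$ and the unknown scalar $e_{p-1}$ is arbitrary in $k$, so this twist is harmless. Second, the lifts were changed in Lemma \ref{lem evaluation g}; one must check that \eqref{second ob eq 2} still holds with the redefined $a,b,c,G$, and it does, since its derivation applies verbatim to any choice of lifts.
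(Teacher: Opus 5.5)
Your proposal is correct and follows the paper's argument: map \eqref{second ob eq 2} to $C$, take degree-$p^2$ components, discard $[g]_{p^2}$ by Lemma \ref{lem evaluation g}, and read off the coefficient of $x^{p-1}y$ in $[\rho(\bar c)]_p$ against $(x^p+y^p)^{p-1}$. The only difference is that you write out the exponent checks showing the $x^{p(p-1)}$ and $y^{p(p-1)}$ terms cannot produce any $M_i$, which the paper's proof leaves implicit.
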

\begin{proof}

By \eqref{second ob eq 2} and Lemma \ref{lem evaluation g},
only $(x+y)^{p(p-1)}[\rho(\bar c)]_p$ contributes to $\gamma_M$.
In this product, only the $x^{p-1}y$ term of $[\rho(\bar c)]_p$
contributes. Denoting its coefficient by $\lambda'\in k$, we obtain
\[
  \gamma_M
  =\lambda'\sum_{i=1}^{p-2}\binom{p-1}{i}M_i
  =\lambda'\mathbf v.
\]

\end{proof}

\subsection{Polynomial identities and root relations}

For a power series $f(t)$, let $[t^j]f(t)$ denote the coefficient of $t^j$ in $f(t)$.

We write
\[
	\gamma_{M}(x,y)=\sum_{i=1}^{p-2}c_iM_i.
\]
Here $c_i\in\F_p$, since $\Gamma\in\Z_p[x,y,z]$. By Proposition \ref{secondary bridge lemma}, $c_i=(-1)^{i-1}c_1$. Thus the remaining polynomial computations take place over $\F_p$.
As this subsection only involves polynomial computations, all previous notation is discarded except for $p$, $\Gamma$, $\gamma_M$, $M_i$ and $c_i$.
Let
\[
	h(t):=\frac{(1+t)^p-1-t^p}{p}, \ \tilde{G}(t):=\frac{h(t)}{t}
\]
and $G$ be the reduction of $\tilde{G}$ modulo $p$. Then
\begin{align} \label{polynomial G}
	G(t)=\sum_{i=0}^{p-2}\frac{(-1)^i}{i+1}t^i\in \F_p[t].
\end{align}

\begin{lem}
	For $1\le i \le p-2$,
	\[
		c_i\equiv (-1)^i-\frac{1}{p}[t^{1+ip}]h^p \pmod{p},
	\]
	and
	\[
		c_1=-\frac{1}{2}.
	\]
\end{lem}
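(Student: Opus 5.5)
The plan is to compute $\Gamma(x,y,-x-y)$ modulo $p$ directly over $\Z_p$, dehomogenizing by $t=y/x$ so that $M_i$ corresponds to $x^{p^2}t^{1+ip}$. Write $\Delta=\delta(x+y+z)=\bigl(x^p+y^p+z^p-(x+y+z)^p\bigr)/p$. Then
\[
\Gamma=-\delta(\Delta)=\frac{\Delta^p-\Delta(x^p,y^p,z^p)}{p}.
\]
The reason for substituting $z=-x-y$ before reducing is that it kills $x+y+z$. Since $p$ is odd, we have $(x+y)^p-x^p-y^p=px^ph(t)$. Hence $\Delta(x,y,-x-y)=-x^p h(t)$, and
\[
\Delta^p\big|_{z=-x-y}=-x^{p^2}h(t)^p.
\]

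For the Frobenius-twisted term, use $z^{p^2}=-(x+y)^{p^2}$ and $x^p+y^p+z^p=-px^ph(t)$ at $z=-x-y$. This gives
\[
\Delta(x^p,y^p,z^p)\big|_{z=-x-y}=x^{p^2}\,\frac{1+t^{p^2}-(1+t)^{p^2}}{p}+p^{p-1}x^{p^2}h(t)^p .
\]
Consequently
\[
\Gamma(x,y,-x-y)=x^{p^2}\Bigl(-\frac{h^p}{p}+\frac{(1+t)^{p^2}-1-t^{p^2}}{p^2}\Bigr)-p^{p-2}x^{p^2}h^p .
\]
The last term vanishes modulo $p$ because $p\ge3$. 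The term $h^p/p$ needs a check at the relevant exponents. Modulo $p$ we have $h(t)^p\equiv h^{\phi}(t^p)$, which only involves powers of $t^p$. So $[t^{1+ip}]h^p\in p\Z_p$, and $\frac1p[t^{1+ip}]h^p$ is a genuine $p$-adic integer.

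Next compute the binomial term. We have
\[
\frac{1}{p^2}\binom{p^2}{1+ip}=\frac{1}{1+ip}\binom{p^2-1}{ip}\equiv\binom{p^2-1}{ip}\pmod p .
\]
Lucas' theorem, with base-$p$ digits $(p-1,p-1)$ for $p^2-1$ and $(i,0)$ for $ip$, gives $\binom{p-1}{i}\equiv(-1)^i$. Together with the previous paragraph this yields the first formula
\[
c_i\equiv(-1)^i-\tfrac1p[t^{1+ip}]h^p .
\]

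For $c_1$, write $h=t\tilde G(t)$, so that $h^p=t^p\tilde G^p$ and $[t^{1+p}]h^p=[t^1]\tilde G^p=p\,\tilde G(0)^{p-1}[t^1]\tilde G$. Here $\tilde G(0)=\binom p1/p=1$ and $[t^1]\tilde G=\binom p2/p=(p-1)/2$. So $\frac1p[t^{1+p}]h^p=(p-1)/2\equiv-\tfrac12$, and therefore $c_1=-1+\tfrac12=-\tfrac12$.

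The main obstacle is the bookkeeping in the second paragraph. One must substitute $z=-x-y$ before reducing modulo $p$, keep the $p$-adic valuations straight, and confirm that the stray term $p^{p-2}h^p$ really drops out for $p\ge 3$.
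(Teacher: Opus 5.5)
Your proposal is correct and follows the paper's proof: you evaluate $\Gamma$ at $(1,t,-1-t)$ to get $c_i\equiv\frac{1}{p^2}\binom{p^2}{1+ip}-\frac1p[t^{1+ip}]h^p$, apply Lucas' theorem, and read off $c_1$ from $h\equiv t+\frac{p-1}{2}t^2 \pmod{t^3}$. You also spell out steps the paper leaves implicit: the exact formula for $\Gamma(1,t,-1-t)$ including the harmless $p^{p-2}h^p$ term, the integrality of $\frac1p[t^{1+ip}]h^p$, and the rewriting $\frac{1}{p^2}\binom{p^2}{1+ip}=\frac{1}{1+ip}\binom{p^2-1}{ip}$ before invoking Lucas.
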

\begin{proof}
	Considering $\Gamma(1,t,-1-t)$, we obtain
	\[
		c_i \equiv \frac{1}{p^2}\binom{p^2}{1+ip}-\frac{1}{p}[t^{1+ip}]h^p \pmod{p}.
	\]
	By Lucas' theorem,
	\[
		\frac{1}{p^2}\binom{p^2}{1+ip}\equiv \binom{p-1}{i}\equiv (-1)^i \pmod{p}.
	\]
	Since
	\[
		h\equiv t+\frac{p-1}{2}t^2 \pmod{t^3},
	\]
	it follows that
	\[
		h^p\equiv t^p+\frac{p(p-1)}{2}t^{p+1}\pmod{t^{p+2}}.
	\]
	Therefore, $c_1=-1/2$.
\end{proof}

Let $u:F_*(\F_p\llbracket t \rrbracket) \to \F_p\llbracket t\rrbracket$ be the $\F_p\llbracket t \rrbracket$-linear map
such that
	\[
		u(\sum_{i\ge 0} a_it^i):=\sum_{i\ge 0}a_{ip}t^i,
	\]
	where $F_*$ denotes the Frobenius pushforward.

Let
\[
	b_i:=\frac{1}{p}[t^{1+ip}]h^p\in \F_p.
\]
We must have
\[
	b_i=\frac{(-1)^i}{2}
\]
for $1\le i \le p-2$.
We have
\begin{align*}
	b_i &\equiv [t^{ip}]\bigl(t^{p-1}\tilde{G}^p+t^p\tilde{G}^{p-1}(\tilde{G})'\bigr) \\
	 & \equiv [t^{(i-1)p}](G^{p-1}G') \pmod{p}.
\end{align*}

Let $B(t):=\sum_{i=0}^{p-3}b_{i+1}t^i=u(G^{p-1}G')=Gu\left(G'/G\right)$.
Let
\[
T:=\sum_{i=0}^{p-3}\frac{(-1)^{i+1}}{2}t^i=-\frac{1+t^{p-2}}{2(1+t)}.
\]
The coefficient relations imply
\begin{align} \label{B=T}
	B=T.
\end{align}
We derive a contradiction from this identity.

\begin{lem}
	The polynomial $G$ is palindromic, satisfies $G(-1)=0$, and
	\[
  		G+tG'=(1+t)^{p-1}-t^{p-1}.
	\]
\end{lem}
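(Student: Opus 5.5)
The plan is to work with the closed form $G(t)=\sum_{i=0}^{p-2}\frac{(-1)^i}{i+1}t^i$ from \eqref{polynomial G} and verify each of the three properties directly over $\F_p$, proving the differential identity first.

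\textbf{The identity.} Multiplying by $t$ gives
\[
tG(t)=\sum_{i=0}^{p-2}\frac{(-1)^i}{i+1}t^{i+1},
\]
and differentiating yields
\[
(tG)'=G+tG'=\sum_{i=0}^{p-2}(-1)^it^i .
\]
Since $(-1)^i\equiv\binom{p-1}{i}\pmod p$, this sum equals $\sum_{i=0}^{p-2}\binom{p-1}{i}t^i$. That is $(1+t)^{p-1}$ with its top term $t^{p-1}$ removed, which is exactly $(1+t)^{p-1}-t^{p-1}$.

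\textbf{Palindromicity.} The degree of $G$ is $p-2$, since its leading coefficient is $(-1)^{p-2}/(p-1)\equiv 1\ne0$. We need the coefficient of $t^{p-2-i}$ to equal that of $t^i$, namely
\[
\frac{(-1)^{p-2-i}}{p-1-i}=\frac{(-1)^i}{i+1}.
\]
In $\F_p$ we have $p-1-i\equiv -(i+1)$. For $p$ odd, $(-1)^{p-2-i}=-(-1)^i$, so the left side is $\frac{-(-1)^i}{-(i+1)}=\frac{(-1)^i}{i+1}$, as required. For $p=2$, $G=1$ and everything is trivial. Alternatively, palindromicity follows from $h(t)=t^ph(1/t)$, which holds because $(1+t)^p-1-t^p$ is itself palindromic; then $\tilde G(t)=h(t)/t$ satisfies $t^{p-2}\tilde G(1/t)=\tilde G(t)$, and the relation reduces mod $p$. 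I would present this second argument since it is cleaner.

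\textbf{The root at $-1$.} Evaluating the identity at $t=-1$ gives
\[
G(-1)-G'(-1)=0-(-1)^{p-1}=-1.
\]
This alone does not determine $G(-1)$, so I would instead combine it with palindromicity. A palindromic polynomial of odd degree $p-2$ (for $p$ odd) satisfies $G(-1)=(-1)^{p-2}G(-1)=-G(-1)$, hence $2G(-1)=0$, and so $G(-1)=0$ since $p\neq 2$. Note that $\deg G=p-2$ exactly, because the leading coefficient is nonzero.

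The only delicate point is the root at $-1$. Palindromicity makes it immediate for odd $p$, and the case $p=2$ (where $G=1$) lies outside the setting $p\ge3$, so it needs no separate treatment.
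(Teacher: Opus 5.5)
Your proof is correct and is essentially the paper's argument. The paper reads palindromicity and $G(-1)=0$ off the closed form for $G$, and gets the identity by differentiating $h=t\tilde G$ (so $h'=(1+t)^{p-1}-t^{p-1}$); your term-by-term differentiation of $tG$ together with $\binom{p-1}{i}\equiv(-1)^i$ reproduces exactly that computation. Your derivation of $G(-1)=0$ from palindromicity in odd degree $p-2$ is sound, and you are right that it genuinely needs $p\ge 3$, since $G=1$ when $p=2$.
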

\begin{proof}
	By \eqref{polynomial G}, $G$ is palindromic and $G(-1)=0$. Differentiating $h=t\tilde{G}$, we obtain the last assertion.
\end{proof}

\begin{prop} \label{prop roots of G}
	With notation as above, let $\beta\in \bar{\F_p}$ be a root of $G$.
	 Then the following statements hold.
	\begin{enumerate}
		\item  $B(\beta^p)=\beta^{p-1}G'(\beta^p)$.
		\item If $B=T$, then $\beta^p=-\beta^2$.
	\end{enumerate}
\end{prop}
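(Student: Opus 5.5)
The plan for (1) is to compute $B=Gu(G'/G)$ via the logarithmic derivative of $G$, and for (2) to evaluate the identity $B=T$ at $\beta^p$ and use the differential identity $G+tG'=(1+t)^{p-1}-t^{p-1}$ from the preceding lemma.

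\emph{Part (1).} Let $\Gamma_0$ be the multiset of roots of $G$ in $\bar{\F_p}$, and write $G=c\prod_{\gamma\in\Gamma_0}(t-\gamma)$. Then
\[
G'/G=\sum_{\gamma}\frac{e_\gamma}{t-\gamma},
\]
where $e_\gamma$ is the multiplicity of $\gamma$. The operator $u$ is additive and satisfies $u(F^pH)=Fu(H)$, and it makes sense on $\bar{\F_p}(t)$. Since
\[
\frac{1}{t-\gamma}=\frac{(t-\gamma)^{p-1}}{t^p-\gamma^p},
\]
and $t^p-\gamma^p$ is a $p$-th power, we get
\[
u\!\left(\frac{1}{t-\gamma}\right)=\frac{u\bigl((t-\gamma)^{p-1}\bigr)}{t-\gamma^p}.
\]
In $(t-\gamma)^{p-1}$ only the constant term has exponent divisible by $p$, so this equals $\gamma^{p-1}/(t-\gamma^p)$. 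Because $G$ has coefficients in $\F_p$, its root multiset is stable under Frobenius, so $G(t)=c\prod_\gamma(t-\gamma^p)$. Hence
\[
B(t)=\sum_{\gamma}e_\gamma\gamma^{p-1}\,\frac{G(t)}{t-\gamma^p},
\]
and each summand is a polynomial.

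Now evaluate at $t=\beta^p$. A term with $\gamma^p\neq\beta^p$ vanishes, because $\beta^p$ is still a root of $G(t)/(t-\gamma^p)$. The terms with $\gamma^p=\beta^p$ all have $\gamma=\beta$, and together they contribute $e_\beta\beta^{p-1}\,\bigl(G(t)/(t-\beta^p)\bigr)(\beta^p)$. Write $G=(t-\beta^p)^{e}H$ with $e=e_\beta$ and $H(\beta^p)\neq 0$. If $e=1$, the contribution is $\beta^{p-1}H(\beta^p)=\beta^{p-1}G'(\beta^p)$. If $e\ge 2$, both the contribution and $G'(\beta^p)$ are zero. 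This gives (1) with no separability assumption.

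\emph{Part (2).} Using (1), $B=T$ gives
\[
\beta^{p-1}G'(\beta^p)=T(\beta^p).
\]
First treat $\beta=-1$, which is a root since $G(-1)=0$. Here $T(-1)=-(p-2)/2=1$ and $\beta^p=-1=-\beta^2$, so the conclusion holds; this case also serves as a consistency check of (1). For $\beta\neq-1$, and $\beta\neq 0$ since $G(0)=1$, use $G'(\beta^p)=G'(\beta)^p$. The lemma $G+tG'=(1+t)^{p-1}-t^{p-1}$ gives
\[
\beta G'(\beta)=(1+\beta)^{p-1}-\beta^{p-1}.
\]
So the left side becomes
\[
\beta^{-1}\bigl((1+\beta^p)^{p-1}-\beta^{p(p-1)}\bigr)\cdot\beta^{p-1-p+1}
\]
after simplifying powers. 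The right side is
\[
-\frac{1+\beta^{p(p-2)}}{2(1+\beta^p)}.
\]
Clearing denominators gives a polynomial relation in $s=\beta^p$ and $\beta$.

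The main obstacle is extracting $\beta^p=-\beta^2$ from this single relation. I would exploit that $G$ is palindromic, so $\beta^{-1}$ is also a root and the same relation holds with $\beta$ replaced by $\beta^{-1}$. I would also use $G(\beta)=0$ in the form $(1+\beta)^p=1+\beta^p$ together with $h(\beta)=0$, i.e.\ the identity $(1+\beta)^p-1-\beta^p\equiv 0$ read through $\tilde G$. The idea is to combine the two relations (for $\beta$ and $\beta^{-1}$) to eliminate $(1+\beta^p)^{p-1}$, using $(1+\beta^p)^{p-1}=(1+\beta^p)^p/(1+\beta^p)$. This should reduce to a low-degree relation that factors as $(\beta^p+\beta^2)\cdot(\text{nonvanishing factor})=0$. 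The step most likely to need care is showing that the complementary factor cannot vanish at a root of $G$. I would try to check that using $G+tG'=(1+t)^{p-1}-t^{p-1}$ once more.
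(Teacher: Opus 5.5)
Your part (1) is the paper's argument: partial fractions for $G'/G$, the formula $u\bigl(1/(t-\gamma)\bigr)=\gamma^{p-1}/(t-\gamma^p)$, Frobenius-stability of the roots with multiplicity, and evaluation at $\beta^p$. It is fine. One correction: for the $\bar{\F}_p$-linear extension of $u$, the rule is $u(F^pH)=F^{\sigma}u(H)$, where $\sigma$ is Frobenius on coefficients, not $u(F^pH)=Fu(H)$. Your computation with $1/(t^p-\gamma^p)$ already uses the twisted form.

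Part (2), however, is not proved. You name the right device, pairing $\beta$ with the root $\beta^{-1}$ via palindromicity, which is exactly what the paper does. But the elimination is only announced, the complementary factor is never identified, and your plan for disposing of it is aimed at the wrong target. You propose to show it cannot vanish at a root of $G$, using $G+tG'$ again; that is not what is needed.

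The missing computation is short. Take $\gamma=\beta^p\neq-1$. Then $G(\gamma)=0$ and $(1+\gamma)^{p-1}=(1+\gamma^p)/(1+\gamma)$ give $\gamma G'(\gamma)=(1-\gamma^{p-1})/(1+\gamma)$. So $B(\gamma)=T(\gamma)$ becomes
\[
2(1-\gamma^{p-1})+\beta(1+\gamma^{p-2})=0 .
\]
Apply this to the root $\beta^{-1}$, whose $p$-th power is $\gamma^{-1}\neq-1$, and multiply by $\beta\gamma^{p-1}$. This gives $-2\beta(1-\gamma^{p-1})+\gamma(1+\gamma^{p-2})=0$. Eliminating $1-\gamma^{p-1}$ yields $(\beta^2+\gamma)(1+\gamma^{p-2})=0$.

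The factor $1+\gamma^{p-2}$ is excluded by the displayed relation itself, that is, by the hypothesis $B=T$, not by any further property of $G$. If $\gamma^{p-2}=-1$, the relation forces $\gamma^{p-1}=1$, hence $\gamma=-1$, a contradiction.

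Finally, your proposed input ``$G(\beta)=0$ in the form $(1+\beta)^p=1+\beta^p$'' carries no information. That identity holds for every element in characteristic $p$. The vanishing of $G$ enters only through $G(\gamma)=0$ in $G+tG'=(1+t)^{p-1}-t^{p-1}$.
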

\begin{proof}
	For a root $\alpha\in \bar{\F}_p$ of $G$, let $m_\alpha$ be the multiplicity of $\alpha$.
	Then
	\[
		\frac{G'}{G}=\sum_{\alpha}\frac{m_\alpha}{t-\alpha}.
	\]
	We extend $u$ to a $\bar{\F_p}$-linear map $\bar{\F_p}\llbracket t \rrbracket \to \bar{\F_p}\llbracket t \rrbracket$.
	Since
	\[
	u\left(\frac{1}{t-\beta}\right)=\frac{\beta^{p-1}}{t-\beta^p},
	\]
	it follows that
	\[
		B=\sum_{\alpha} \frac{m_\alpha\alpha^{p-1}G}{t-\alpha^p}.
	\]
	Since Frobenius preserves multiplicities, evaluation at $\beta^p$ gives $B(\beta^p)=\beta^{p-1}G'(\beta^p)$;
	both sides vanish if $\beta$ is a multiple root.
	To prove (2),
	set $\gamma=\beta^p$. If $\gamma\neq -1$, then
	\[
	G(\gamma)+\gamma G'(\gamma)=\frac{1-\gamma^{p-1}}{1+\gamma},
	\]
	and
	\[
		T(\gamma)=-\frac{1+\gamma^{p-2}}{2(1+\gamma)}.
	\]
	Hence, by $B=T$,
	\begin{align} \label{beta gamma eq}
		2(1-\gamma^{p-1})+\beta(1+\gamma^{p-2})=0.
	\end{align}
	Since $G$ is palindromic, $\beta^{-1}$ satisfies the same equation, i.e.,
	\[
		2(1-\gamma^{-(p-1)})+\beta^{-1}(1+\gamma^{-(p-2)})=0.
	\]
	Hence,
	\[
		-2\beta(1-\gamma^{p-1})+\gamma(1+\gamma^{p-2})=0.
	\]
	Therefore,
	\[
		(\beta^2+\gamma)(1+\gamma^{p-2})=0.
	\]
	If $1+\gamma^{p-2}=0$, then $1-\gamma^{p-1}=0$ by (\ref{beta gamma eq}). Hence, $\gamma=-1$, which is a contradiction. Therefore, $\beta^p=\gamma=-\beta^2$. This equality also holds for $\beta=-1$.
\end{proof}

\begin{proof}[Proof of Theorem \ref{thm $x^p+y^p+z^p$}]
	Let $\beta_1,\dots,\beta_{p-2}\in \bar{\F_p}$ be all the roots of $G$ counted with multiplicity. Since Frobenius permutes $\beta_1,\dots,\beta_{p-2}$, by Proposition \ref{prop roots of G} (2),
\[
	\sum_{i=1}^{p-2}\beta_i=\sum_{i=1}^{p-2}\beta_i^p=-\sum_{i=1}^{p-2}\beta_i^2.
\]

Since
\[
	G=t^{p-2}-\frac{1}{2}t^{p-3}+\frac{1}{3}t^{p-4}+(\text{lower terms}),
\]
we obtain
\[
	\sum_{i=1}^{p-2}\beta_i=\frac{1}{2}, \ \sum_{i=1}^{p-2}\beta_i^2=\left(\frac{1}{2}\right)^2-2\cdot \frac{1}{3}=-\frac{5}{12}.
\]
However, $1/2\neq 5/12 \in \F_p$ for any $p\ge 5$, which is a contradiction.
\end{proof}

\section{One-dimensional hypersurfaces}

We next study one-dimensional hypersurfaces of the form $W(k)\llbracket x\rrbracket/(F)$.
We establish a necessary condition for the existence of perfectoid towers and use it to determine exactly when the ring $W(k)\llbracket x\rrbracket/(p^a+x^b)$ admits a perfectoid tower.

\begin{setting} \label{setting one variable}
	Let $F\in W(k)\llbracket x \rrbracket$ be a nonzero nonunit and $R:=W(k)\llbracket x \rrbracket/(F)$. Suppose that $R$ is $p$-torsionfree. Then there exists $b\in \Z_{> 0}$ such that
\[
	R/pR\cong k[x] /(x^b).
\]
Multiplying $F$ by a unit if necessary, we may assume that
\[
	F=x^b+pG
\]
for some $G\in W(k)\llbracket x \rrbracket$.
Suppose that $R$ admits a preperfectoid tower arising from $(R,(p))$. Take $X\in R_1$ such that $F_0(\bar{X})=\bar{x}\in A$. We also fix $h\in R_1$ such that $X^b=\pi h$. Let $h_0:=F_0(\bar{h})\in A$.
\end{setting}

\begin{lem} \label{first reduction lemma}
	Suppose that $b\ge 3$ and $h_0\in x A$. Then $\mathrm{ord}_x(h_0)=1$.
\end{lem}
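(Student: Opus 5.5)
The plan is to extract two relations from Setting \ref{setting one variable} and compare $x$-adic orders in $A=k[x]/(x^b)$. Throughout, write $h_0=x^e w$ with $w\in A^\times$, where $e\ge 1$ by hypothesis. I will argue by contradiction: assume $e\ge 2$, so that $h_0\in x^2A$.

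\textbf{First-order relation.} I redo the computation of Proposition \ref{prop first obstruction} for the single equation $F=x^b+pG$. Write $t_0(x)=X^p+pa$ with $a\in R_1$. Expanding $0=t_0(F)$ gives
\[
X^{pb}+pbaX^{p(b-1)}+p\,t_0(G)\equiv 0 \pmod{p^2R_1}.
\]
From $X^b=\pi h$ and $\pi^p=pu$ we get $X^{pb}=puh^p$. Dividing by $p$ therefore yields
\[
uh^p+baX^{p(b-1)}+t_0(G)\in pR_1=\pi^pR_1 .
\]
Next I apply $F_0$. Since $p(b-1)\ge b$, the middle term dies. Injectivity of $\bar{t_0}$ gives $F_0\circ\bar{t_0}=F_A$. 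Hence
\[
F_0(\bar u)\,h_0^p=-\bar G^{\,p}\quad\text{in } A .
\]
This constrains $\bar G$ in terms of $h_0$, but it is too weak when $pe\ge b$, because both sides are then $0$. So a finer congruence modulo $\pi^p$, not just modulo $\pi$, will be needed.

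\textbf{Refined congruence.} To get it, I pass to the completion $S$ from Lemma \ref{lem completion of R1}. There $S/\pi S\cong A$, $S$ is $\pi$-torsionfree, and every element can be written as a power series in $X$ and $\pi$ whose coefficients are Teichmüller-type lifts from $k$. The assumption $e\ge2$ means $h\in (X^2,\pi)S$, say $h=X^2s+\pi r$. Then
\[
X^b=\pi X^2 s+\pi^2 r .
\]
I will substitute this into $X^{pb}=puh^p$ and expand $h^p$ modulo $p\pi$. This gives
\[
X^{pb}\equiv pu\,(X^{2p}s^p+\pi^p r^p)\pmod{p\pi S}.
\]
Comparing this with $X^{pb}=(X^b)^p=\pi^p(X^2s+\pi r)^p$ and cancelling powers of $\pi$ (which is allowed by $\pi$-torsionfreeness) should give a relation in $S/\pi S=A$. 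I will then push this relation through the first-order relation above to get an equation in $A$ of the form $x^{\text{something}}\cdot\text{unit}=0$ whose exponent is $<b$. That is the desired contradiction, and it uses $b\ge3$: the case $b=2$ has $x^2=0$, so nothing is forced. The same bookkeeping should also show that $e=1$ is consistent, but only the contradiction for $e\ge 2$ is needed.

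\textbf{Main obstacle.} The hardest step will be the order bookkeeping in $S$ modulo $\pi^{p+1}$. The $\pi$-adic and $X$-adic filtrations interact, since $X^b\in\pi S$, so I must track the leading form in $\gr_{(\pi,X)}S$. My first attempt will be to show that $\operatorname{in}(X)$ and $\operatorname{in}(\pi)$ satisfy
\[
\operatorname{in}(X)^b=\operatorname{in}(\pi)\operatorname{in}(X)^e\operatorname{in}(w)
\]
in the associated graded ring. If $e\ge2$, this would make $X^{b-2}$ divisible by $\pi$ up to higher-order terms, which is then incompatible with $X^b\notin\pi^2S$. That last fact I expect to read off from $F_0(\bar h)=h_0\neq 0$.
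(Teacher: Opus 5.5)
Your plan has a genuine gap: it never uses the input that actually forces the contradiction. That input is injectivity of $\bar{t_0}$ applied to the nonzero element $x^{b-1}\in A$, i.e.\ $\bar{X}^{p(b-1)}=\bar{t_0}(x^{b-1})\neq 0$ in $R_1/pR_1$. The paper argues directly in $R_1$. From $h\in(X^2,\pi)R_1$ one gets $X^b\in(\pi X^2,\pi^2)R_1$. Induction then gives $X^{n(b-1)}\in(\pi^nX,\pi^{n+1})R_1$ for all $n\ge2$; the base case $X^{2(b-1)}\in(\pi X^b,\pi^2X^{b-2})R_1$ is where $b\ge3$ enters. Taking $n=p$ gives $X^{p(b-1)}\in\pi^pR_1=pR_1$, a contradiction.

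This argument also disposes of the case $h_0=0$, which you silently exclude. You write $h_0=x^ew$ with $w$ a unit and later rely on $h_0\neq0$, equivalently $X^b\notin\pi^2S$. But $h_0=0$ is allowed by the hypothesis $h_0\in xA$, and it is one of the cases that must be ruled out.

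The steps you do propose yield no usable information:
\begin{enumerate}
\item The relation $F_0(\bar u)h_0^p=-\bar G^{\,p}$ only constrains $G$, about which the lemma assumes nothing.
\item The ``refined congruence'' is vacuous. The equality $X^{pb}=(\pi h)^p=\pi^ph^p=puh^p$ is just $\pi^p=pu$, so comparing your two expressions for $X^{pb}$ and cancelling powers of $\pi$ gives an identity, not a relation in $A$.
\item No contradiction can come from the structure of $S$ together with $X^b=\pi h$ alone, which is what your graded-ring endgame attempts. Let $\mathcal O:=W(k)[T]/(T^p-p)$ with $\pi$ the class of $T$, and $S:=\mathcal O\llbracket X\rrbracket/(X^b-\pi X^2)$. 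This ring is $\pi$-torsionfree with $S/\pi S\cong k[x]/(x^b)$, and it has $h=X^2$, $h_0=x^2\neq0$ and $X^b\notin\pi^2S$, so $e=2$ is consistent with all the local data you use.
\end{enumerate}
Your proposed identity $\operatorname{in}(X)^b=\operatorname{in}(\pi)\operatorname{in}(X)^e\operatorname{in}(w)$ also fails in this $S$ for $b\ge4$: in $\gr_{(\pi,X)}S\cong k[\pi,X]/(\pi X^2)$ one has $\operatorname{in}(\pi)\operatorname{in}(X)^2=0$ while $\operatorname{in}(X)^b\neq0$. In this ring $X^{p(b-1)}=\pi^pX^p\in pS$, which is exactly what injectivity of $\bar{t_0}$ forbids in a tower. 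That tower axiom is the ingredient you need to add.
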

\begin{proof}
	Suppose $\mathrm{ord}_x(h_0)\ge 2$. Then $h \in (X^2, \pi)R_1$.
Hence,
\begin{align} \label{one dimensional eq}
	X^b=\pi h\in (\pi X^2, \pi^2)R_1.
\end{align}
We show the following claim.
\begin{cl}
	For any $n\ge 2$, $X^{n(b-1)}\in (\pi^nX,\pi^{n+1})R_1$.
\end{cl}
\begin{clproof}
	We show the claim by induction on $n$.
	If $n=2$, then, by (\ref{one dimensional eq}),
	\[
		X^{2(b-1)}\in X^{b-2}(\pi X^2,\pi^2)R_1=(\pi X^{b}, \pi^2 X^{b-2}) R_1\subseteq (\pi^2 X,\pi^3)R_1.
	\]
	Suppose that $n\ge 3$.
	Then, by the induction hypothesis,
	\begin{align*}
		X^{n(b-1)}\in X^{b-1}(\pi^{n-1}X,\pi^n)R_1\subseteq (\pi^{n-1}X^b,\pi^{n}X^{b-1})R_1\subseteq (\pi^n X,\pi^{n+1})R_1.
	\end{align*}
\end{clproof}

	By the above claim, we have $X^{p(b-1)}\in \pi^{p}R_1=pR_1$. On the other hand, since $\bar{t_0}:R_0/pR_0\to R_1/pR_1$ is injective by an axiom of preperfectoid towers, we see that
\[
	(\bar{X})^{p(b-1)}=\bar{t_0}(x^{b-1})\neq 0 \in R_1/pR_1,
\]
which is a contradiction.
\end{proof}

If $b\ge 3$ and $h_0\in xA$, then by Lemma \ref{first reduction lemma},
there exist $v,\alpha \in R_1$ such that $F_0(\bar{v})\in A^\times$ and
\begin{align} \label{h formula}
	h=Xv+\pi \alpha.
\end{align}
Hence,
\[
	X^b=\pi X v +\pi^2 \alpha.
\]
\begin{lem} \label{second reduction lemma}
	Suppose that $b\ge 3$ and $h_0\in xA$.
	For $N=n(b-1)+r$, where $n\ge 0$ and $1\le r\le b-1$,
	\[
		X^N\in \pi^n X^rv^n+(\pi^nX^{r+1},\pi^{n+1})R_1.
	\]
	In particular, $\mathrm{ord}_{\pi}(X^N)=n$.
\end{lem}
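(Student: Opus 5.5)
Write $I_n:=(\pi^nX,\pi^{n+1})R_1$. The plan is to prove the displayed membership by induction on $N$, using the relation $X^b=\pi Xv+\pi^2\alpha$ from \eqref{h formula}. Dividing by $X$ is not available, so I use it in the form
\[
X^{b}=\pi X v+\pi^2\alpha .
\]
Multiplying by $X^{b-2}$ gives $X^{b-1}\cdot X^{b-1}$-type reductions. More usefully, for $r\ge 1$ it gives
\[
X^{(b-1)+r}=X^{r-1}X^{b}=\pi X^{r}v+\pi^2X^{r-1}\alpha .
\]
This lowers the exponent by $b-1$ at the cost of one factor $\pi v$ and an error in $\pi^2R_1$.

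\textbf{Base case.} For $n=0$ and $1\le r\le b-1$ the claim $X^r\in X^r+(X^{r+1},\pi)R_1$ is trivial.

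\textbf{Induction step.} Let $N=n(b-1)+r$ with $n\ge1$. Write $N=(N-(b-1))+(b-1)$, where $N-(b-1)=(n-1)(b-1)+r$. By induction,
\[
X^{N-(b-1)}=\pi^{n-1}X^rv^{n-1}+\pi^{n-1}X^{r+1}s+\pi^n t
\]
for some $s,t\in R_1$. Multiplying by $X^{b-1}$ gives three terms, which I handle separately:
\begin{itemize}
\item $\pi^{n-1}v^{n-1}X^{r+b-1}=\pi^{n-1}v^{n-1}(\pi X^rv+\pi^2X^{r-1}\alpha)$. This lies in $\pi^nX^rv^n+\pi^{n+1}R_1$.
\item $\pi^{n-1}sX^{r+b}=\pi^{n-1}sX^{r+1}X^{b-1}$. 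Rewriting $X^{r+b}=X^{r}X^{b}=\pi X^{r+1}v+\pi^2X^r\alpha$ shows this lies in $\pi^nX^{r+1}R_1+\pi^{n+1}R_1\subseteq I_n$.
\item $\pi^n tX^{b-1}\in \pi^nXR_1\subseteq I_n$, since $b-1\ge1$.
\end{itemize}
Together these give the displayed membership. The only care needed is bookkeeping: the reduction identity must use $r-1\ge0$, which holds because $r\ge1$.

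\textbf{The ``in particular''.} We have $X^N\in\pi^nR_1$, so $\mathrm{ord}_\pi(X^N)\ge n$. Suppose $X^N\in\pi^{n+1}R_1$. Then $\pi^n(X^rv^n+Xw)\in\pi^{n+1}R_1$ for some $w\in R_1$. Since $R_1$ is $p$-torsionfree and $\pi^p=pu$ with $u$ a unit, $R_1$ is $\pi$-torsionfree, so we may cancel $\pi^n$ and get $X^rv^n+Xw\in\pi R_1$. Apply $F_0$, using $R_1/\pi R_1\cong A=k[x]/(x^b)$ via $F_0$. This gives $x^r\cdot(\text{unit})\in xA\cdot x$-adic terms, more precisely
\[
x^rF_0(\bar v)^n\in x^{r+1}A .
\]
Here I must note that the error term is really in $X^{r+1}R_1$, not merely in $XR_1$, which is why the statement tracks $X^{r+1}$. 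Because $r\le b-1$ and $F_0(\bar v)$ is a unit, $x^r\notin x^{r+1}A$ in $k[x]/(x^b)$, a contradiction. Hence $\mathrm{ord}_\pi(X^N)=n$.

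The main obstacle is this last step, specifically the cancellation of $\pi^n$. That needs $\pi$-torsionfreeness, which I deduce from $p$-torsionfreeness and $\pi\mid p$ as above; I could alternatively pass to the completion $S$ of Lemma \ref{lem completion of R1}.
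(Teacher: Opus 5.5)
There is a genuine gap in your third term. The statement requires the error to lie in $(\pi^nX^{r+1},\pi^{n+1})R_1$. So does the inductive hypothesis in the form you actually invoke, with error $\pi^{n-1}X^{r+1}s+\pi^nt$. For $\pi^ntX^{b-1}$, however, you only show membership in $I_n=(\pi^nX,\pi^{n+1})R_1$, and then claim the displayed membership.

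This is harmless when $r\le b-2$, because then $X^{b-1}\in X^{r+1}R_1$. For $r=b-1$, though, the target ideal is $(\pi^nX^b,\pi^{n+1})R_1=\pi^{n+1}R_1$, since $X^b\in\pi R_1$. Then $\pi^ntX^{b-1}\in\pi^{n+1}R_1$ would mean $tX^{b-1}\in\pi R_1$, i.e.\ $F_0(\bar t)\in xA$. The inductive hypothesis for $X^{n(b-1)}$ amounts only to $X^{n(b-1)}\in\pi^{n-1}X^{b-1}v^{n-1}+\pi^nR_1$, and it says nothing about $t$ modulo $(X,\pi)$.

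So the induction does not close. The ``in particular'' is not established either, since, as you note yourself, it needs the $X^{r+1}$ form. A telltale sign is that you never use $b\ge 3$. For $b=2$ the membership does not follow from $X^b=\pi Xv+\pi^2\alpha$: one computes $X^3=\pi^2X(v^2+\alpha)+\pi^3v\alpha$, so the case $n=2$ would require $X\alpha\in\pi R_1$.

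The repair is to stop multiplying the inductive expression by $X^{b-1}$ in the case $r=b-1$. The paper writes $X^N=X^{N-b}X^b=\pi vX^{N-(b-1)}+\pi^2\alpha X^{N-b}$ and multiplies the inductive expression for $X^{N-(b-1)}$ by $\pi v$. The error $\pi^nt$ then becomes $\pi^{n+1}vt$. The remaining term $\pi^2\alpha X^{N-b}$ lies in $\pi^{n+1}R_1$ when $n=1$ (trivially) or $r\ge 2$ (by induction). When $r=1$ and $n\ge 2$, induction puts it in $(\pi^nX^{b-1},\pi^{n+1})R_1$, and $b\ge 3$ gives $X^{b-1}\in X^2R_1$.

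Alternatively, you can stay with your induction on $N$. Use your decomposition only for $r=1$; the third term is then fine because $b-1\ge 2=r+1$. For $r\ge 2$, write $X^N=X\cdot X^{N-1}$ with $N-1=n(b-1)+(r-1)$, which gives $X^N\in\pi^nX^rv^n+(\pi^nX^{r+1},\pi^{n+1}X)R_1$ at once. Your derivation of $\mathrm{ord}_\pi(X^N)=n$ from the correct membership, via $\pi$-torsionfreeness and $R_1/\pi R_1\cong k[x]/(x^b)$, is fine.
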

\begin{proof}
	We show the lemma by induction on $n$. The case $n=0$ is immediate. If $n\ge 1$, then
	\[
		X^{N}=X^{N-b}X^b=\pi X^{N-(b-1)}v+\pi^2 X^{N-b}\alpha.
	\]
	By the induction hypothesis,
	\[
		\pi X^{N-(b-1)}v\in \pi^n X^{r} v^n+(\pi^nX^{r+1},\pi^{n+1})R_1.
	\]
	If $n=1$ or $r\ge 2$, then $\pi^2 X^{N-b}\in \pi^{n+1}R_1$. If $r=1$ and $n\ge 2$, then
	\[
		\pi^2X^{N-b}=\pi^2X^{(n-2)(b-1)+b-1}\in (\pi^n X^{b-1},\pi^{n+1})R_1.
	\]
\end{proof}

\begin{thm} \label{thm one dimensional}
	Suppose that $b\ge 3$ and $\bar{G}\in x^2 k\llbracket x \rrbracket$. Then $R$ does not admit a preperfectoid tower arising from $(R,(p))$.
\end{thm}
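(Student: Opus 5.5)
The plan is to extract one relation from $t_0(F)=0$ modulo $p^2$ and then play it against the $\pi$-adic order computation of Lemma \ref{second reduction lemma}. We work in the setting of Setting \ref{setting one variable}, with $F=x^b+pG$, $b\ge 3$ and $G\in (x^2,p)W(k)\llbracket x\rrbracket$.

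\textbf{Step 1: the basic relation.} Write $t_0(x)=X^p+pa$ with $a\in R_1$; this is possible because $\bar{t_0}\circ F_0$ is the Frobenius. Expanding gives
\[
0=t_0(F)\equiv X^{pb}+pbX^{p(b-1)}a+p\,t_0(G)\pmod{p^2R_1}.
\]
Since $X^b=\pi h$ and $\pi^p=pu$, we have $X^{pb}=puh^p$. Dividing by $p$, which is allowed because $R_1$ is $p$-torsionfree, yields
\[
uh^p+bX^{p(b-1)}a+t_0(G)\in pR_1.
\]
Because $G\in(x^2,p)$ and $t_0(x)\in (X^p,p)$, we get $t_0(G)\in (X^{2p},p)R_1$. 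Therefore
\[
uh^p\in (X^{2p},X^{p(b-1)},p)R_1 .
\]

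\textbf{Step 2: $h_0\in xA$.} Apply $F_0$ to the last inclusion. This gives $h_0^p\in xA$, and since $A=k[x]/(x^b)$ it follows that $h_0\in xA$. Lemma \ref{first reduction lemma} then applies, so \eqref{h formula} holds: $h=Xv+\pi\alpha$ with $F_0(\bar v)\in A^\times$. Every intermediate binomial coefficient in $(Xv+\pi\alpha)^p$ is divisible by $p$, and $\pi^p\alpha^p\in pR_1$, so $h^p\equiv X^pv^p\pmod{pR_1}$. Substituting, we obtain
\[
uv^pX^p\in (X^{2p},X^{p(b-1)},\pi^p)R_1 .
\]
As $p\ge 1$ and $b\ge 3$, both $X^{2p}$ and $X^{p(b-1)}$ lie in $X^{p+1}R_1$. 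Hence $X^p(uv^p-Xw)\in\pi^pR_1$ for some $w\in R_1$.

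\textbf{Step 3: contradiction via $\pi$-orders.} Pass to the completion $S$ of $R_1$ at $\m_1=(\pi,X)$ given by Lemma \ref{lem completion of R1}. In $S$ the element $uv^p-Xw$ is a unit, because $u$ is a unit and $F_0(\bar v)$ is a unit of $A$. Thus $X^p\in\pi^pS$. On the other hand, write $p=n(b-1)+r$ with $1\le r\le b-1$. Since $b\ge 3$, we have $n\le p/(b-1)<p$. Lemma \ref{second reduction lemma} gives $X^p\in \pi^nX^rv^n+(\pi^nX^{r+1},\pi^{n+1})$, which already forces $\mathrm{ord}_\pi(X^p)=n<p$, a contradiction.

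\textbf{Main obstacle.} The delicate point is that the order statement of Lemma \ref{second reduction lemma} must still hold after completing. Concretely, we need $\pi^nX^rv^n\notin(\pi^nX^{r+1},\pi^{n+1})S$. To see this, divide by $\pi^n$, which is legitimate since $S$ is $\pi$-torsionfree. The claim then reduces to $x^r\cdot\text{unit}\notin x^{r+1}A$ in $S/\pi S\cong A=k[x]/(x^b)$, and this holds because $r\le b-1$. Beyond that, we must make sure the middle binomial terms and $t_0(G)$ are handled correctly modulo $p$ rather than modulo $p^2$, which is why we divide by $p$ only once, in Step 1.
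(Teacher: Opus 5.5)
Your proof is correct and follows the paper's route. You divide $t_0(F)=0$ by $p$, use Lemma \ref{first reduction lemma} to write $h=Xv+\pi\alpha$, and then use Lemma \ref{second reduction lemma} to get $\mathrm{ord}_\pi(X^p)=n<p$. The only difference is that you use $h^p\equiv X^pv^p \pmod{pR_1}$ to factor $X^p$ times a unit out of the whole relation, which makes the paper's separate treatment of the case $b>p$, and its separate $\pi$-order estimates for $X^{p(b-1)}$ and $G(X^p+pc)$, unnecessary.
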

\begin{proof}
	There exists $c \in R_1$ such that $t_0(x)=X^p+pc$.  Let $S$ be the completion of $R_1$ with respect to $(\pi, X)$.
	Hence,
	\begin{align}\label{one dimensional expansion eq}
	\begin{aligned}
		0 &= t_0(x^b+pG)\\
		&= (X^p+pc)^b+pG(X^p+pc)\\
		& = puh^p+\sum_{i=1}^b \binom{b}{i}X^{p(b-i)}p^ic^i+pG(X^p+pc) \in S.
	\end{aligned}
	\end{align}
	Since $S$ is $p$-torsionfree by Lemma \ref{lem completion of R1} and $p(b-1)\ge b$, we obtain
	\[
		F_0(\bar{u})h_0^p+\bar{G}^p=0 \in A.
	\]
	Since $\bar{G}$ is not a unit, $h_0\in xA$. Hence, by Lemma \ref{first reduction lemma},
	$\mathrm{ord}_x h_0=1$.

	If $b>p$, then $\mathrm{ord}_x(h_0^p)=p$. Hence, $\mathrm{ord}_x (\bar{G}^p)=p$. This is a contradiction since $\bar{G}\in x^2A$ by the assumption. Hence, we may assume that $b\le p$.

Take $n\in \Z_{\ge0}$ and $1\le r\le b-1$ such that $p=n(b-1)+r$.
Then by Lemma \ref{second reduction lemma} and (\ref{h formula}), we see that $\mathrm{ord}_\pi(X^p)=\mathrm{ord}_{\pi}(h^p)=n$.
Applying Lemma \ref{second reduction lemma} for $N=p(b-1)=(p-1)(b-1)+(b-1)$, we see that $\mathrm{ord}_\pi(X^{p(b-1)})=p-1$.
There exist $H,K\in W(k)\llbracket x \rrbracket$ such that $G=x^2H+pK$. Hence,
\[
	G(X^p+pc)\in (\pi^{2n},p)S\subseteq \pi^{n+1} S.
\]
By (\ref{one dimensional expansion eq}), since $p-1>n$,
\[
	0=uh^p+\sum_{i=1}^b \binom{b}{i}X^{p(b-i)}p^{i-1}c^i+G(X^p+pc) \in \pi^{n}S\setminus \pi^{n+1}S.
\]
This is a contradiction.
\end{proof}

\begin{cor} \label{cor $p^a+x^b$}
	Let $a,b\in \Z_{>0}$ and $R:=W(k)\llbracket x \rrbracket/(p^a+x^b)$. Then
	$R$ admits a perfectoid tower arising from $(R,(p))$ if and only if one of the following conditions holds.
	\begin{enumerate}
		\item $a=1$ or $b=1$.
		\item $p\ge 3$ and $a=b=2$.
	\end{enumerate}
\end{cor}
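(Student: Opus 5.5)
We treat the two directions separately. Throughout, $A=R/pR\cong k[x]/(x^b)$, and we write $F=x^b+p^a$, so that in Setting \ref{setting one variable} we have $G=p^{a-1}$.

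\textbf{Existence.} If $b=1$, then $R\cong W(k)$. If $a=1$, then $R=W(k)\llbracket x\rrbracket/(x^b+p)$ is a complete DVR with uniformizer $x$. In both cases $R$ is regular, hence log-regular, so a perfectoid tower arising from $(R,(p))$ exists by the theorem of Ishiro--Nakazato--Shimomoto \cite{INS25}.

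Now let $p\ge 3$ and $a=b=2$. Adjoin $i=\sqrt{-1}$ if necessary; since $p$ is odd we may also divide by $2$. Put $U=x+ip$ and $V=x-ip$. Then $UV=0$ and $p=(U-V)/(2i)$, so
\[
R\cong W(k)\llbracket U,V\rrbracket/(UV,\ 2ip-U+V)\cong W(k')\llbracket U,V\rrbracket/(UV,\ 2ip-U+V),
\]
where $W(k')$ accounts for the adjoined $i$. This is a node with $p$ a unit multiple of $U-V$.
\begin{enumerate}
\item First try the tower $R_j=R[U^{1/p^j},V^{1/p^j}]/(U^{1/p^j}V^{1/p^j})$.
\item The key identity is $(U^{1/p}-V^{1/p})^p=U-V$: the mixed terms vanish because $U^{1/p}V^{1/p}=0$. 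Hence $\pi=U^{1/p}-V^{1/p}$ satisfies $\pi^p R_1=pR_1$.
\item Then check the remaining tower axioms (Frobenius surjectivity mod $p$, kernel equal to $\pi$, injectivity), in the same way as for monoid algebras. Alternatively, quote that this ring is log-regular.
\end{enumerate}
If $i\notin W(k)$, I would handle the descent by the quadratic unramified extension, or work directly with the norm form.

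\textbf{Nonexistence, $a,b\ge2$.}
\begin{enumerate}
\item \emph{Case $b\ge3$, $a\ge2$.} Here $G=p^{a-1}\in pW(k)\llbracket x\rrbracket$, so $\bar G=0\in x^2k\llbracket x\rrbracket$. Theorem \ref{thm one dimensional} applies directly.
\item \emph{Case $b=2$, with $a\ge3$, or $a=2$ and $p=2$.} This is the remaining case, and I will imitate the proof of Theorem \ref{thm one dimensional} by hand, since Lemma \ref{first reduction lemma} needs $b\ge3$.
\end{enumerate}

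For the case $b=2$, write $X^2=\pi h$ and $t_0(x)=X^p+pc$, and let $S$ be the $(\pi,X)$-adic completion, which is $\pi$-torsionfree by Lemma \ref{lem completion of R1}. Expanding gives
\[
0=puh^p+2pcX^p+p^2c^2+p^a \quad\text{in } S.
\]
\begin{enumerate}
\item Divide by $p$ and reduce mod $p$, using $a\ge3$, or $p=2$ so that $2pc X^p\equiv0$. Applying $F_0$ and using $\bar x^p=0$ in $A$ yields $h_0^p=0$, so $h_0\in xA$.
\item Hence $h=Xv+\pi\alpha$, and $X^2\in(\pi X,\pi^2)$. This makes $X/\pi$ integral over $S$ and forces $\mathrm{ord}_\pi$ of powers of $X$ to grow like $\lceil N/2\rceil$ at most.
\item To get the contradiction, compute the $\pi$-adic order of each term in $uh^p+2cX^p+pc^2+p^{a-1}=0$, as in the proof of Theorem \ref{thm one dimensional}. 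The aim is to show that exactly one term has minimal $\pi$-order: $uh^p$ has order about $p/2$, while $p^{a-1}$ has order $p(a-1)$.
\item For $p=2$, $a=2$ the numbers are small ($\pi^2=2u$, $X^2=\pi h$), and I would do the expansion explicitly modulo $\pi^4$.
\end{enumerate}

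I expect the main obstacle to be the $b=2$ nonexistence. There the order bookkeeping does not immediately isolate a unique lowest term, because $X^2$ and $\pi X$ may have the same order. I would first try to pin down $F_0(\bar v)$ by comparing with the next-order equation, and use $\mathrm{ord}_\pi(X^{p})$ versus $\mathrm{ord}_\pi(h^p)$ as in Lemma \ref{second reduction lemma}, suitably adapted to $b=2$.
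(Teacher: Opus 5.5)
\textbf{There is a genuine gap in the $b=2$ nonexistence cases, and a smaller gap in the existence case $a=b=2$.}

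\textbf{The $b=2$ nonexistence.} Your handling of $a=1$ or $b=1$, and of $b\ge3$ with $a\ge2$, matches the paper. Your derivation of $h_0=\lambda x$ when $b=2$ is also fine. But the nonexistence for $b=2$ (both $p=2$, $a\ge2$ and $p\ge3$, $a\ge3$) is not proved, and the order bookkeeping you propose is off. From $h=\lambda_1X+\pi\alpha$ you get $(X,\pi)^2\subseteq\pi(X,\pi)$, so $h^p$ and $X^p$ both lie in $\pi^{p-1}(X,\pi)$. Hence $uh^p$ does not have order about $p/2$, and the term it must be compared with is $2cX^p$. The term $p^{a-1}$ only enters when $a=2$, which is exactly why $a=b=2$ survives.

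The missing ingredients are a change of lift and the injectivity of $\bar{t_0}$, which your sketch never uses.
\begin{itemize}
\item \emph{Odd $p$.} Replace $X$ by $X-2^{-1}\lambda_1\pi$; this does not change $F_0(\bar X)$. Then $h_0=0$ and $X^2=\pi^2h'$, so $X^p=X\pi^{p-1}h'^{(p-1)/2}$. Since $\bar X^p=\bar{t_0}(\bar x)\ne0$, $h'$ must be a unit. Dividing $uh^p+2X^pc+pc^2+p^{a-1}=0$ by $\pi^{p-1}$ gives, for $a\ge3$, $2Xh'^{(p-1)/2}c+\pi(uh'^p+u^{-1}c^2)\equiv0\pmod{\pi^2R_1}$. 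Modulo $\pi$ this forces $c\in(X,\pi)R_1$. Writing $c=Xc_1+\pi c_2$ and passing to the next order gives $2x{h'_0}^{(p-1)/2}c_{2,0}+u_0{h'_0}^p=0$ in $A$, where the subscript $0$ denotes images in $A$. This says a non-unit equals a unit, a contradiction.
\item \emph{$p=2$.} No expansion modulo $\pi^4$ is needed. The equation gives $h^2\in\pi^2R_1$. If $\lambda=0$, then $X^2=\pi h\in 2R_1$, contradicting injectivity of $\bar{t_0}$. If $\lambda\ne0$, then $h^2\equiv\lambda_1^2X^2=\lambda_1^2\pi h\pmod{\pi^2R_1}$ forces $h\in\pi R_1$, i.e.\ $\lambda=0$.
\end{itemize}

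\textbf{Existence for $p\ge3$, $a=b=2$.} Your node tower looks fine when $\sqrt{-1}\in W(k)$, that is, when $p\equiv1\bmod 4$ or $\F_{p^2}\subseteq k$. Otherwise there are three problems.
\begin{itemize}
\item The ring $W(k')\llbracket U,V\rrbracket/(UV,\,2ip-U+V)$ is $R\otimes_{W(k)}W(k')$, not $R$, so your displayed isomorphism fails.
\item The descent from $W(k')$ back to $W(k)$ is not carried out.
\item The fallback ``$R$ is log-regular'' is false. Log-regular rings are normal domains, but $R$ is not normal; it is not even a domain when $\sqrt{-1}\in W(k)$.
\end{itemize}
The paper's tower avoids $\sqrt{-1}$ entirely: $R_i=W(k)\llbracket s_i,x_i\rrbracket/(s_i^{p^i}-p,\,x_i^2+s_i^2)$ with $s_i\mapsto s_{i+1}^p$ and $x_i\mapsto x_{i+1}^p$. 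The transition maps are well defined because $(x_{i+1}^2)^p=(-s_{i+1}^2)^p=-s_{i+1}^{2p}$ for odd $p$.
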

\begin{proof}
	If $a=1$ or $b=1$, then $R$ admits a perfectoid tower arising from $(R,(p))$ by \cite[Example 3.62 (1)]{INS25}, since $R$ is a regular local ring.

If $p\ge 3$ and $a=b=2$, let $R_i:=W(k)\llbracket s_i, x_i\rrbracket/(s_i^{p^i}-p,x_i^2+s_i^2)$ and let
\[
	t_i:R_i\to R_{i+1};\  s_i\mapsto s_{i+1}^{p},\  x_i\mapsto x_{i+1}^p.
\]
Then $(\{R_i\}_{i\ge 0},\{t_i\}_{i\ge 0})$ is a perfectoid tower arising from $(R,(p))$.

By Theorem \ref{thm one dimensional}, $R$ does not admit a preperfectoid tower if $b\ge 3$ and $a\ge 2$.

Suppose that $p=2$,  $a\ge 2$ and $b=2$. We show that $R$ does not admit a preperfectoid tower arising from $(R,(p))$. By (\ref{one dimensional expansion eq}), we see that
\[
	uh^2+2X^2c+2c^2+2^{a-1}=0.
\]
Hence, $h^2\in \pi^2 R_1$.
Since $h_0^2=0$ and $A\cong k[x]/(x^2)$, there exists $\lambda\in k$ such that $h_0=\lambda x$.
If $\lambda=0$, then $X^2=\pi h\in \pi^2 R_1=2R_1$. Hence,
\[
\bar{t_0}(x)=\bar{X}^2= 0 \in R_1/2R_1,
\]
which contradicts the injectivity of $\bar{t_0}$. Therefore, $\lambda\neq 0$. Let $\lambda_1\in R_1$ be a lift of $\lambda\in k\subseteq  A$. Then
\[
	h\equiv \lambda_1 X \pmod{\pi R_1}.
\]
Since
\[
	h^2\equiv \lambda_1^2X^2=\lambda_1^2\pi h \pmod{\pi^2R_1},
\]
we have $\pi h\in \pi^2 R_1$. Since $R_1$ is $\pi$-torsionfree, $h\in \pi R_1$. Hence, $h_0=0$, which is a contradiction.

It remains to consider $p\ge3$, $a\ge3$, and $b=2$.

By (\ref{one dimensional expansion eq}), $h_0^p=0\in A\cong k[x]/(x^2)$. Hence, there exists $\lambda\in k$ such that $h_0=\lambda x$. Let $\lambda_1\in R_1$ be a lift of $\lambda \in k\subseteq A$. Replacing $X$ by $X-2^{-1}\pi\lambda_1$, we may assume that $h_0=0$. Then there exists $h'\in R_1$ such that $h=\pi h'$ and $X^2=\pi^2h'$.
We have
\[
	X^p=X\pi^{p-1}h'^{\frac{p-1}{2}}.
\]
Let $h_0'\in A$ be the image of $h'$. Then $h'_0$ is a unit. Indeed, if not, $Xh'^{(p-1)/2}\in \pi R_1$ and $X^p\in pR_1$. This is a contradiction since $\bar{t_0(x)}=\bar{X}^p\neq 0$.
By (\ref{one dimensional expansion eq}), we obtain
\[
	uh^p+2X^{p}c+pc^2+p^{a-1}=0.
\]
Hence,
\[
	2Xh'^{\frac{p-1}{2}}c+\pi (uh'^p+u^{-1}c^2)\equiv 0 \pmod{\pi^2 R_1}.
\]
Reducing modulo $\pi$, we see that $xc_0=0\in A$, where $c_0\in A$ is the image of $c\in R_1$. Hence, $c_0\in xA$. Therefore, there exist $c_1, c_2\in R_1$ such that $c=Xc_1+\pi c_2$. Then we have
\[
	2x{h'}_0^{\frac{p-1}{2}}c_{2,0}+u_0{h'}_0^p=0 \in A,
\]
where $c_{2,0}$ and $u_0$ are the images of $c_2$ and $u$ in $A$, respectively. This is a contradiction since the first term is not a unit, but the second term is a unit.

\end{proof}

\section{A lift of a rational double point}
\label{section $p^2+x^p+yz$}
We give a mixed-characteristic lift $R$ of an $F$-liftable rational double point that admits no preperfectoid tower arising from $(R,(p))$.
\begin{thm} \label{thm $p^2+x^p+yz$}
Let $k$ be a perfect field of characteristic $p>0$, and let
\[
  R:=W(k)\llbracket x,y,z\rrbracket/(p^2+x^p+yz).
\]
Then $R$ does not admit a preperfectoid tower arising from $(R,(p))$.
\end{thm}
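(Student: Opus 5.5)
We run the same kind of second-order analysis as in Section \ref{section second obstruction}. The new feature is that the $p^2$ in the equation produces a \emph{unit} term at second order, and everything else at that order lies in the maximal ideal. The special fiber $A=k\llbracket x,y,z\rrbracket/(x^p+yz)$ is $F$-liftable. Also, modulo $p$ the element $\delta(p^2+x^p+yz)$ agrees with $\delta(x^p+yz)$. So Proposition \ref{prop first obstruction} alone gives no contradiction, and we must go to the $p^2$-level of the expansion of $t_0(f)$.

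\emph{Step 1: first order.} Assume a preperfectoid tower exists and use Setting \ref{setting perfectoid tower}. Choose $X,Y,Z\in R_1$ with $F_0(\bar X)=\bar x$, $F_0(\bar Y)=\bar y$, $F_0(\bar Z)=\bar z$, and write
\[
t_0(x)=X^p+pa,\quad t_0(y)=Y^p+pb,\quad t_0(z)=Z^p+pc .
\]
Since $f^{\phi^{-1}}=p^2+x^p+yz$, and $p^2\in\pi^{2p}R_1$, there is $h$ with $X^p+YZ+p^2=\pi h$. Pass to the completion $S$ of Lemma \ref{lem completion of R1}, which is Noetherian and $\pi$-torsionfree with $S/\pi S\cong A$. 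Expand
\[
0=t_0(f)=p^2+(X^p+pa)^p+(Y^p+pb)(Z^p+pc)
\]
modulo $p^3S$. Substitute $X^{p^2}+Y^pZ^p=(\pi h-p^2)^p$ minus the binomial cross terms $pE$, where $E:=\delta(X^p+YZ)$ up to sign and is polynomial in $X^p$ and $YZ$. This gives
\[
p^2+puh^p-pE+p(Z^pb+Y^pc)+p^2bc+p^2X^{p(p-1)}a\equiv 0 \pmod{p^3 S}.
\]
Dividing by $p$ and reducing modulo $\pi$ recovers the first obstruction: it determines $h_0=F_0(\bar h)$ in terms of the images of $b$ and $c$. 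Concretely, $F_0(\bar u)h_0^p$ equals $\overline{\delta(x^p+yz)}$ minus $z^pF_0(\bar b)+y^pF_0(\bar c)$ in $A$.

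\emph{Step 2: second order.} Divide the displayed congruence by $p$ and rewrite it as
\[
uh^p-E+Z^pb+Y^pc\equiv -p\bigl(1+bc+X^{p(p-1)}a\bigr)\pmod{p^2S}.
\]
As in the definition of $G$ in Section \ref{section second obstruction}, we choose lifts so that the left-hand side is visibly $p$ times an explicit element. This requires writing $uh^p$ modulo $p^2$ using a $\pi$-adic expansion of $h$, namely $h=h'+\pi h''$ with $h'$ built from lifts of the first-order data. We then divide by $p$ once more and reduce modulo $\pi$, applying $F_0$. The result is an identity in $A$ whose right-hand side is $-1-F_0(\bar b)F_0(\bar c)-x^{p(p-1)}F_0(\bar a)$. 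Its left-hand side consists of $p$-th powers, $\delta$-type polynomials in $x,y,z$, and terms coming from $h''$ and from the first-order relation.

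\emph{Step 3: the contradiction.} We show that every term on the left-hand side, and the terms $F_0(\bar b)F_0(\bar c)$ and $x^{p(p-1)}F_0(\bar a)$, lie in $\m_A$. The $\delta$-polynomials have no constant term, and $x^{p(p-1)}\in\m_A$. For $b$ and $c$, the first-order relation forces $y^pF_0(\bar c)+z^pF_0(\bar b)\in\m_A^{p+1}$-type constraints, from which we deduce that $F_0(\bar b)$ and $F_0(\bar c)$ are non-units, or at least that their product is. Once this is established, reading the identity modulo $\m_A$ gives $1=0$ in $k$, a contradiction.

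The main obstacle is Step 2: controlling $uh^p$ modulo $p^2S$. This needs the expansion of $h$ to second order in $\pi$, together with an argument, analogous to Lemma \ref{lem evaluation g}, that the cross terms $\pi^j$ appearing in $(h'+\pi h'')^p$ contribute only elements of $\m_A$ after division. We would first try to exploit that $u$ is a unit and that $h_0\in\m_A$. We expect $h_0\in\m_A$ because $\overline{\delta(x^p+yz)}\in\m_A$ and $p$-th roots preserve the maximal ideal, so all correction terms carry a positive power of the maximal ideal. If this fails, we would work in the associated graded ring $\gr_{\m_S}S$ and compare initial forms, as in Lemma \ref{second reduction lemma}.
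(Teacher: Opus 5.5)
\textbf{There is a genuine gap in Step 3.} First, the first-order relation alone does not make $F_0(\bar b)F_0(\bar c)$ a non-unit. Indeed, $h_0=y+z$ is compatible with that relation and forces $F_0(\bar b)\equiv F_0(\bar c)\equiv -F_0(\bar u)$ modulo $(y^p,z^p)A$. You must first change the lifts: write $h=Yg_1+Zg_2+\pi g_3$ and replace $Y,Z$ by $Y-\pi g_2$, $Z-\pi g_1$, so that $X^p+YZ=\pi^2h_2$ and $b,c\in\n$.

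After this change your $h$ equals $\pi h_2+\pi^{2p-1}u^{-2}$. So $h'=0$, $h''\equiv h_2\pmod{\pi}$, and the $j=p$ term of $(h'+\pi h'')^p$ contributes $u\pi^ph''^p/p=u^2h''^p$ to your second-order identity. Nothing in the proposal puts $h_2$ into the maximal ideal; your heuristic $h_0\in\m_A$ concerns $h'$, not $h''$. For $p\ge3$ all the other contributions do lie in $\n$, because $P$ is a non-zero-divisor on $\gr_\n S$. So the constant term of your identity reads $1+\bar u^2\bar h_2^{\,p}=0$ in $k$, which is solvable. This comparison is exactly the paper's order argument: if $h_2\in\n$, then $\gr_\n S\cong k[P,X,Y,Z]/(YZ)$, so $p^2=u^{-2}\pi^{2p}$ has order exactly $2p$, while $t_0(x)^p+t_0(yz)\in\n^{2p+1}$. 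What it proves is that $h_2$ is a unit, not a contradiction.

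\textbf{The missing idea} is a second obstruction ruling out $h_2\in S^\times$. The paper extracts it from the first-order relation, but with more $\pi$-adic precision than Proposition \ref{prop first obstruction} uses. For $p\ge3$, $\Delta=(X^{p^2}+Y^pZ^p-F^p)/p\equiv\pi^2h_2X^{p(p-1)}\pmod{\pi^3}$. Work in $S/(Y^p,Z^p)$, which is $\pi$-torsionfree and kills $Y^pc+Z^pb$. There the relation $\Delta+Y^pc+Z^pb\in pS$, after dividing by $\pi^2$ and reducing mod $\pi$, gives $h_2x^{p(p-1)}=0$ in $k\llbracket x,y,z\rrbracket/(x^p+yz,y^p,z^p)$. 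In that ring $x^{p(p-1)}=(yz)^{p-1}\neq0$ and its annihilator is $(y,z)$, so $h_2$ cannot be a unit. Only with this input does your constant-term comparison yield $1=0$.

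\textbf{The case $p=2$} cannot go through your uniform expansion. You dropped the term $p^2a^2$ of $(X^p+pa)^p$, which survives for $p=2$, and then the constant term balances again. In the paper, $g=F_0(\overline{uh_2+a})$ must satisfy $1+g^2+gyz=0$ in $k[x,y,z]/(x^2+yz,y^2,z^2)$. The constant term only forces $g\equiv1$ modulo $(x,y,z)$; the contradiction comes from comparing the $xyz$- and $yz$-coefficients.
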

\begin{proof}
Suppose that $R$ admits a preperfectoid tower arising from $(R,(p))$. Choose $X,Y,Z\in R_1$ such that $F_0(\bar X)=\bar x$, $F_0(\bar Y)=\bar y$, and $F_0(\bar Z)=\bar z$ in $A$, and set $F=X^p+YZ$. Choose a unit $u\in R_1^\times$ such that $\pi^p=pu$. There exist elements $a,b,c\in R_1$ such that
\[
	t_0(x)=X^p+pa, \quad t_0(y)=Y^p+pb, \quad t_0(z)=Z^p+pc.
\]
Let $(S,\n)$ be the completion of $R_1$ with respect to $(\pi,X,Y,Z)$. Then $S$ is a $\pi$-torsionfree Noetherian local ring by Lemma \ref{lem completion of R1}.

\begin{cl}
	Changing the choice of lifts $X,Y,Z$, we may assume that $F\in \pi^2 R_1$ and $b,c\in (\pi,X,Y,Z)$.
\end{cl}
\begin{clproof}
	Since $F_0(\bar{F})=0$, $F=\pi h_1$ for some $h_1\in R_1$.
	Let
	\[
		\Delta:=\frac{X^{p^2}+Y^pZ^p-F^p}{p}.
	\]
	Then
	\begin{align} \label{X^p+YZ expansion}
		\begin{aligned}
		0&=t_0(p^2+x^p+yz)\\
		 &\equiv F^p+p\Delta+p(Y^pc+Z^pb) \pmod{p^2 R_1}.
		\end{aligned}
	\end{align}
	If $p\ge 3$, then $F$ divides $\Delta$. If $p=2$, then $\Delta=-X^2YZ$. Hence, $F_0(\bar{\Delta})=-y^2z^2\in R/pR$ if $p=2$. Dividing (\ref{X^p+YZ expansion}) by $p$ and reducing modulo $\pi$, we obtain
\[
		F_0(\bar{h_1})^p\in (y^p,z^p)R/pR.
\]
Hence, $F_0(\bar{h_1})\in (y,z)R/pR$. Therefore, there exist $g_1,g_2,g_3\in R_1$ such that
\[
	h_1=Yg_1+Zg_2+\pi g_3.
\]
Replacing $Y,Z$ by $Y-\pi g_2$, $Z-\pi g_1$ gives $X^p+YZ\in \pi^2 R_1$.
Dividing (\ref{X^p+YZ expansion}) by $p$, and reducing modulo $\pi$, we have
\[
	y^pF_0(\bar{c})+z^pF_0(\bar{b})+\epsilon=0 \in R/pR, \quad \epsilon =\begin{cases}
	0, & (p\ge 3),\\
	y^2z^2, & (p=2).
	\end{cases}
\]
Hence, $b,c \in (\pi,X,Y,Z)R_1$.
\end{clproof}

For these adjusted lifts, write $F=X^p+YZ=\pi^2h$ with $h\in R_1$.

Considering a surjection
\[
W(k)\llbracket P,X,Y,Z\rrbracket \twoheadrightarrow S; P\mapsto \pi, X\mapsto X, Y\mapsto Y, Z\mapsto Z,
\]
choose lifts $U,H\in W(k)\llbracket P,X,Y,Z\rrbracket$ of $u,h$, respectively, so that
\[
	S \cong W(k)\llbracket P,X,Y,Z \rrbracket/(P^p-pU, X^p+YZ-P^2H).
\]

\noindent\textbf{Case 1: $p\ge 3$.}
	We first prove that $h$ is a unit. Otherwise, $H\in (P,X,Y,Z)$. Since
\[
	\gr_{\n}S\cong k[P,X,Y,Z]/(YZ),
\]
we have $p^2=u^{-2}\pi^{2p}\notin \n^{2p+1}$. Since $YZ=-X^p+\pi^2h \in \n^3$ and $b,c\in \n$,
\[
	t_0(x)^p\in \n^{p^2}\subseteq \n^{2p+1}, \quad t_0(yz)=(YZ)^p+p(Y^pc+Z^pb)+p^2bc \in \n^{2p+1},
\]
which contradicts $p^2+t_0(x)^p+t_0(yz)=0$. Therefore, $h$ is a unit.

We have
\[
	\Delta=\sum_{i=1}^{p-1}\frac{1}{p}\binom{p}{i}F^i(-X^p)^{p-i}\equiv \pi^2hX^{p(p-1)} \pmod{\pi^3 R_1}.
\]
Let $S':=S/(Y^p,Z^p)$. Note that $S'$ is $\pi$-torsionfree.
Passing to $S^{\prime}$, dividing (\ref{X^p+YZ expansion}) by $\pi^2p$, and reducing modulo $\pi$, we obtain
\[
	hX^{p(p-1)}=0 \in S'/\pi S'\cong R/(p,y^p,z^p).
\]
Since $h$ is a unit, we obtain
\[
	x^{p(p-1)}=(yz)^{p-1}=0 \in R/(p,y^p,z^p)R\cong k\llbracket x,y,z\rrbracket/(x^p+yz,y^p,z^p).
\]
This is a contradiction.

\noindent\textbf{Case 2: $p=2$.}
Since $X^2+YZ=\pi^2 h$, $\pi^2=2u$ and $t_0(4+x^2+yz)=0$, we obtain
\[
	2D+Y^2c+Z^2b+Y^2Z^2=0,
\]
where $D:=1+u^2h^2-uhYZ+X^2a+a^2+bc$.
The images of $Y^2,Z^2$ form a regular sequence on $R_1/\pi R_1$, corresponding to $y^2,z^2$ under $R_1/\pi R_1\cong R/2R\cong k\llbracket x,y,z\rrbracket/(x^2+yz)$. Hence, $b\in (\pi,Y^2)R_1$ and $c\in (\pi,Z^2)R_1$.
Since $S/(Y^2,Z^2)$ is $\pi$-torsionfree, $D\in (Y^2,Z^2)S$.
Reducing modulo $\pi$ and setting $g:=F_0(\bar{uh+a})$, we obtain
\[
	1+g^2+gyz=0\in T:=k[x,y,z]/(x^2+yz,y^2,z^2).
\]
Write $g=\alpha+\beta x+\gamma$, where $\alpha,\beta\in k$ and $\gamma\in (y,z)T$. Then
\[
	0=1+\alpha^2+(\beta^2+\alpha)yz+\beta xyz\in T.
\]
Comparing the constant and $xyz$ coefficients gives $\alpha=1$ and $\beta=0$, contradicting the $yz$ coefficient.
\end{proof}

\appendix
\section{Proof of Lemma \ref{lem evaluation g}} \label{appendix homogeneous component}
We use the notation of Subsection \ref{subsection coefficient obstruction}.
\subsection{Associated graded rings and an order estimate}
Take a lift $\tilde{u}$ of $u$ and let
\[
	T:=W(k)\llbracket P,X,Y,L \rrbracket/(P^p-\tilde{u}p).
\]
Then $T$ is a regular local ring and we have a natural surjection $T\twoheadrightarrow S$, $P\mapsto\pi$. Let $I$ be its kernel. Since $S$ is $\pi$-torsionfree, $I\cap PT=PI$. Thus
\[
 I/PI\cong\ker(T/(P)\to S/\pi S)=(L^p).
\]
Choosing $F\in I$ lifting $L^p$, Nakayama's lemma gives
\[
 I=(F),\qquad S\cong T/(F),\qquad F\equiv L^p\pmod{PT}.
\]

Let $\n:=(P,X,Y,L)$ be the maximal ideal of $T$. We consider the associated graded ring $\gr_\n(T)$. Note that
\[
	\gr_\n(T)\cong k[P,X,Y,L].
\]

Since $p=P^{p}\tilde{u}^{-1}$, $\mathrm{ord}_\n(p)=p$.
Fix lifts $\tilde{a}, \tilde{b}, \tilde{c}\in T$ of $a,b$ and $c$.
Since $k$ is perfect, there exist $\alpha, \beta, \gamma\in k$ such that
\begin{align*}
	\mathrm{in}_{\n}(X^p+p\tilde{a}) &= (X+\alpha P)^p, \\
	\mathrm{in}_{\n}(Y^p+p\tilde{b}) &= (Y+\beta P)^p, \\
	\mathrm{in}_{\n}(Z^p+p\tilde{c}) &= (L-X-Y+\gamma P)^p.
\end{align*}

Let $\lambda:=\alpha+\beta+\gamma$. Since $t_0(x^p+y^p+z^p)=0 \in S$, we have
\[
	Q:=(X^p+p\tilde{a})^p+(Y^p+p\tilde{b})^p+(Z^p+p\tilde{c})^p\in (F)
\]
and
\[
	\mathrm{in}_\n(Q)=(L+\lambda P)^{p^2}.
\]
Let $\tilde{\lambda}\in T$ be a lift of $\lambda$. Replacing $Z$ by $Z+\tilde{\lambda}P$, we may assume that
\[
	\mathrm{in}_\n(Q)=L^{p^2}.
\]
We redefine $L,h,a,b,c,D,G$ and $g$ using these adjusted lifts. The identities in Section \ref{section second obstruction} continue to hold in $S$.

We also use $\Delta$ for the polynomial $\Delta(X,Y,Z)$ in $T$. Then we see that
\[
	L^p=\pi h -p\Delta \text{ in $S$}.
\]
Let $\eta:=\tilde{h}-P^{p-1}\tilde{u}^{-1}\Delta$, where $\tilde{h}, \tilde{u}\in T$ are lifts of $h$ and $u$, respectively. Then
\[
L^p \equiv P\eta \pmod{(F)T}.
\]
Note that  $\bar{\eta}=\bar{h}\in S/\pi S\cong A$.
\begin{lem} \label{tangent cone lemma}
	With notation as above, $\mathrm{in}_\n(F)=L^p$ and $h\in \m_S^p$.
\end{lem}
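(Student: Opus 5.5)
The plan is to work entirely in the regular local ring $T$, whose associated graded ring $\gr_\n(T)\cong k[P,X,Y,L]$ is a polynomial ring, hence a domain. Because of this, $\operatorname{in}_\n(ab)=\operatorname{in}_\n(a)\operatorname{in}_\n(b)$ and $\mathrm{ord}_\n(ab)=\mathrm{ord}_\n(a)+\mathrm{ord}_\n(b)$ for nonzero $a,b\in T$. We also use that $P$ is a prime element of $T$, since $T/PT\cong k\llbracket X,Y,L\rrbracket$ is a domain.

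\emph{Step 1: $\operatorname{in}_\n(F)=L^p$.} Write $F=L^p+Pw$ with $w\in T$, which is possible since $F\equiv L^p\pmod{PT}$. Since $Q\in(F)$, write $Q=qF$. Multiplicativity of initial forms gives $\operatorname{in}_\n(q)\operatorname{in}_\n(F)=L^{p^2}$. By unique factorization in $k[P,X,Y,L]$, this forces $\operatorname{in}_\n(F)=cL^j$ for some $c\in k^\times$ and $j\le p^2$. Now compare with $F=L^p+Pw$, where $\operatorname{in}_\n(Pw)=P\operatorname{in}_\n(w)$ is divisible by $P$. We split into three cases according to $\mathrm{ord}_\n(Pw)$.
\begin{enumerate}
\item If $\mathrm{ord}_\n(Pw)<p$, then $\operatorname{in}_\n(F)=P\operatorname{in}_\n(w)$. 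This is divisible by $P$ and so is not of the form $cL^j$, a contradiction.
\item If $\mathrm{ord}_\n(Pw)=p$, then $\operatorname{in}_\n(F)=L^p+P\operatorname{in}_\n(w)$, unless this sum vanishes. It cannot vanish, because $P\nmid L^p$. It also cannot equal $cL^p$, because that would require $P\mid(c-1)L^p$ with $P\operatorname{in}_\n(w)\neq 0$. So this case is impossible too.
\item Hence $\mathrm{ord}_\n(Pw)\ge p+1$, and therefore $\operatorname{in}_\n(F)=L^p$.
\end{enumerate}
As a byproduct, $\mathrm{ord}_\n(w)=\mathrm{ord}_\n(Pw)-1\ge p$, so $w\in\n^p$.

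\emph{Step 2: $h\in\m_S^p$.} From $L^p\equiv P\eta\pmod{(F)}$, write $L^p-P\eta=rF$ with $r\in T$. Substituting $F=L^p+Pw$ gives
\[
P(\eta+rw)=(1-r)L^p.
\]
Since $P$ is prime and $P\nmid L^p$, we get $P\mid 1-r$. Write $1-r=Ps$. Cancelling $P$ (legitimate because $T$ is a domain) yields $\eta=sL^p-rw$. Both terms lie in $\n^p$, using $w\in\n^p$ from Step 1, so $\eta\in\n^p$.

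Finally, $\tilde h=\eta+P^{p-1}\tilde u^{-1}\Delta$. Here $\Delta=\delta(X+Y+Z)$ is a homogeneous polynomial of degree $p$ in $X,Y,Z$, and $Z=L-X-Y$ (or $L-X-Y$ plus a multiple of $P$ after the adjustment of $Z$), so $\Delta\in\n^p$. Thus $\tilde h\in\n^p$, and its image $h$ lies in $\m_S^p$.

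The main obstacle is Step 1: ruling out that the correction term $Pw$ contributes to, or cancels, the initial form of $F$. The key inputs there are the divisibility $\operatorname{in}_\n(F)\mid L^{p^2}$ coming from $Q$, together with the primality of $P$ in $\gr_\n(T)$. Step 2 is then a routine divisibility argument.
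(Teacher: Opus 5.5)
Your proof is correct and follows the paper's argument. You show $\mathrm{in}_\n(F)\mid L^{p^2}$ in the domain $k[P,X,Y,L]$, so $\mathrm{in}_\n(F)=cL^j$; then $F\equiv L^p \pmod{PT}$ pins this down to $L^p$; finally you get $\eta\in\n^p$ and $\Delta\in\n^p$. Your case analysis and the explicit identity $\eta=sL^p-rw$ (using that $P$ is prime in $T$) spell out what the paper compresses into ``$\mathrm{in}_\n(L^p-P\eta)=\mathrm{in}_\n(F)=L^p$, hence $\eta\in\n^p$.''
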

\begin{proof}
	Since $Q\in (F)$ and $\gr_\n T\cong k[P,X,Y,L]$ is a domain, $\mathrm{in}_\n(F)|L^{p^2}$. Hence,
	\[
		\mathrm{in}_\n(F)=\mu L^r
	\]
	for some $\mu \in k^\times$, where $r:=\mathrm{ord}_\n(F)$. Since $F\equiv L^p \pmod P$, $r=p$ and $\mu=1$.
	Since $L^p-P\eta\in (F)T$ and $L^p\equiv F \pmod{(P)T}$, $\mathrm{in}_\n(L^p-P\eta)=\mathrm{in}_\n(F)=L^p$. Hence, $\eta\in \n^p$. Since $\Delta\in \m_S^p$, we obtain $h\in \m_S^p$.
\end{proof}

Reducing (\ref{second ob expansion D}) modulo $\pi$, we obtain
\[
	\bar{u}\bar{h}^p+\bar{\Delta}^p=0 \in S/\pi S.
\]
Therefore,
\[
	\rho(\bar{\Delta})^p=-\rho(u)\rho(\bar{h})^p.
\]
Since $C$ is normal, there exists $v\in C^\times$ such that
\[
	\rho(\bar{\Delta})=v\rho(\bar{h}).
\]
We take a lift $v_1\in S^\times$ of $v$.

\begin{cor} \label{cor carry}
	Let $S':=S/(h^p)$ and $\m_{S'}$ be its maximal ideal. Then $P$ is a non-zero-divisor in $\gr_{\m_{S'}}S'$. Consequently, for any $N, r\ge1$, we have
\[
	P^rS'\cap \m_{S'}^{N+r}=P^r\m_{S'}^{N}.
\]
\end{cor}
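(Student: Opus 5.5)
The plan is to compute $\gr_{\m_{S'}}S'$ explicitly as a quotient of $\gr_\n T\cong k[P,X,Y,L]$ and read off that $P$ is a non-zero-divisor there. Write $S'=T/(F,\tilde h^p)$, where $\tilde h\in T$ is a lift of $h$. By Lemma \ref{tangent cone lemma} we know $\mathrm{in}_\n(F)=L^p$. The expected answer is
\[
\gr_{\m_{S'}}S'\cong k[P,X,Y,L]/(L^p,\ \mathrm{in}_\n(\tilde h)^p),
\]
and the whole proof reduces to understanding $\mathrm{in}_\n(\tilde h)$.

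\textbf{Step 1: the initial form of $h$.} By Lemma \ref{tangent cone lemma}, $h\in\m_S^p$, so we may choose $\tilde h\in\n^p$. I will show that $\mathrm{ord}_\n(\tilde h)=p$ and that $\mathrm{in}_\n(\tilde h)\notin (P,L)$.

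For this, use the relation $\rho(\bar\Delta)=v\rho(\bar h)$ in $C$ with $v\in C^\times$. The ring $C=A/(l)\cong k\llbracket x,y\rrbracket$ is identified with $T/(P,L)\cong k\llbracket X,Y\rrbracket$. Under this identification, $\rho(\bar\Delta)$ is the class of
\[
\Delta(x,y,-x-y)=\frac{x^p+y^p+(-x-y)^p}{p},
\]
up to Frobenius twisting of coefficients. For $p$ odd this is a nonzero homogeneous polynomial of degree $p$ in $x,y$.

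It follows that $\rho(\bar h)$ has order exactly $p$ in $k\llbracket x,y\rrbracket$. Since $\tilde h\in\n^p$, its degree-$p$ initial form reduces modulo $(P,L)$ to the nonzero form $v(0)^{-1}\rho(\bar\Delta)$. Hence $\mathrm{in}_\n(\tilde h)$ has degree $p$ and is not in $(P,L)$. Here one must check that reduction modulo $(P,L)$ is compatible with taking initial forms; this holds because $P,L$ are part of a regular system of parameters of $T$.

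\textbf{Step 2: the associated graded ring.} Set $\theta:=\mathrm{in}_\n(\tilde h)^p$, so that $\mathrm{in}_\n(\tilde h^p)=\theta$ because $\gr_\n T$ is a domain.

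Since $\mathrm{in}(\tilde h)\notin(L)$ and $L$ is prime in $k[P,X,Y,L]$, the forms $L^p$ and $\theta$ are coprime, so they form a regular sequence. By the Valabrega--Valla criterion (a regular sequence of initial forms generates the initial ideal), we get
\[
\mathrm{in}_\n(F,\tilde h^p)=(L^p,\theta),
\]
which gives the displayed isomorphism. This is the step I expect to be the main obstacle: one must verify that the standard-basis criterion applies in the local ring $T$, which is complete and regular with $\gr_\n T$ a polynomial ring. If that is delicate, I would instead argue directly: if $aF+b\tilde h^p$ has unexpectedly high order, then the leading forms satisfy a syzygy $\mathrm{in}(a)L^p+\mathrm{in}(b)\theta=0$, and coprimality lets us lower the orders of $a,b$ inductively.

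\textbf{Step 3: $P$ is a non-zero-divisor, and the consequence.} It suffices that $P$ is regular on $k[P,X,Y,L]/(L^p,\theta)$. This is equivalent to $L^p,\theta,P$ being a regular sequence, and in turn to $L^p,\ \theta|_{P=0}$ being a regular sequence in $k[X,Y,L]$. That holds because $\mathrm{in}(\tilde h)|_{P=0}$ is not divisible by $L$, by Step 1.

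For the consequence, let $s\in S'$ with $P^rs\in\m_{S'}^{N+r}$, and suppose $j=\mathrm{ord}(s)<N$. Here $j<\infty$ unless $s=0$, by Krull's intersection theorem in the Noetherian local ring $S'$. Then
\[
\mathrm{in}(P^rs)=P^r\,\mathrm{in}(s)\neq0
\]
has degree $r+j<N+r$, which is a contradiction. Hence $s\in\m_{S'}^N$, which gives the inclusion $P^rS'\cap\m_{S'}^{N+r}\subseteq P^r\m_{S'}^N$; the reverse inclusion is clear.
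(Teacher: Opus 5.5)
Your proposal is correct and follows the paper's argument. Like the paper, you identify $\gr_{\m_{S'}}S'$ with $k[P,X,Y,L]/(L^p,\mathrm{in}(h)^p)$, read off that $P$ is a non-zero-divisor there, and deduce the intersection equality from additivity of orders. There are two minor differences. You apply the Valabrega--Valla criterion to $(F,\tilde h^p)$ in $T$ in one step, rather than first passing through $\gr_{\m_S}S\cong k[P,X,Y,L]/(L^p)$. Your Step 1, using $\rho(\bar\Delta)=v\rho(\bar h)$, also spells out why $\mathrm{in}(h)\notin(P,L)$, which the paper leaves implicit.
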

\begin{proof}
	By Lemma \ref{tangent cone lemma}, $\gr_{\m_S}{S}\cong k[P,X,Y,L]/(L^p)$.
	Since $\mathrm{in}_{\m_{S}}(h)$ is a non-zero-divisor on  $\gr_{\m_S}{S}$,
	\[
		\gr_{\m_{S'}}{S'}\cong \frac{k[P,X,Y,L]}{(L^p,\mathrm{in}_{\m_{S}}(h)^p)}.
	\]
	Hence, $P$ is a non-zero-divisor on this ring. Thus, for any $r\ge 1$ and $f\in S'\setminus\{0\}$,
	\[
		\mathrm{ord}_{\m_{S'}}(P^rf)=r+\mathrm{ord}_{\m_{S'}}(f),
	\]
	which gives the desired equality.
\end{proof}

\subsection{Evaluation of $\epsilon$}
 Let $\epsilon:=\Delta-v_1h$ and $U:=u+v_1^p$.
Then $\rho(\bar{\epsilon})=0$. By (\ref{second ob expansion D}),
we see that
\[
	Uh^p+\epsilon^p \equiv uh^p+\Delta^p\equiv 0 \pmod{pS}.
\]
\begin{lem} \label{lem evaluation epsilon}
	With notation as above,
	\[
		F_0(\bar{\epsilon})\in(x,y,l)^{p+1}A.
	\]
\end{lem}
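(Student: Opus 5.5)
The plan is to show that the order of $e:=F_0(\bar\epsilon)$ in $A$ cannot be $\le p$, by comparing $\m$-adic orders in $S':=S/(h^p)$, where Corollary \ref{cor carry} controls division by powers of $P$. The main input is the congruence $Uh^p+\epsilon^p\equiv 0\pmod{pS}$. In $S'$ the term $Uh^p$ disappears, so
\[
\epsilon^p\in pS'=\pi^pS'.
\]
By Corollary \ref{cor carry}, $P$ is a non-zero-divisor on $S'$, since it is one on $\gr_{\m_{S'}}S'$. Recall also that $\gr_{\m_S}S\cong k[P,X,Y,L]/(L^p)$ and $\gr_{\m_{S'}}S'\cong k[P,X,Y,L]/(L^p,\mathrm{in}(h)^p)$. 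Here $\mathrm{in}(h)$ has degree $\ge p$ by Lemma \ref{tangent cone lemma}, so the extra relation sits in degree $\ge p^2$.

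First I decompose $\epsilon$. Since $\rho(\bar\epsilon)=0$, we have $e\in lA$. Write $e=l e'$ and let $d:=\mathrm{ord}(e)$, so that $\mathrm{ord}(e')=d-1$. Choose $\epsilon'\in S$ to be a $P$-free power series in $X,Y,L$ lifting $e'$, with initial form of degree $d-1$. Then
\[
\epsilon=L\epsilon'+\pi\epsilon_1
\]
for some $\epsilon_1\in S$.

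Next I expand the $p$-th power. The mixed binomial terms in $(L\epsilon'+\pi\epsilon_1)^p$ are divisible by $p\pi=\pi^{p+1}u$. Moreover $L^p=P\eta$ in $S$, with $\eta\equiv h\pmod{P}$ and $\eta\in\n^p$ (Lemma \ref{tangent cone lemma} and the definition of $\eta$). Together these give
\[
P\eta\epsilon'^p\in P^pS'.
\]
Cancelling $P$, which is allowed because $P$ is a non-zero-divisor on $S'$, yields $\eta\epsilon'^p\in P^{p-1}S'$.

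The core is an order count. The element $\eta\epsilon'^p$ has $\m_{S'}$-order at least $p+p(d-1)=pd$. Its leading form is $\mathrm{in}(\eta)\mathrm{in}(\epsilon')^p$ whenever that product is nonzero in $\gr S'$. By Corollary \ref{cor carry}, membership in $P^{p-1}S'\cap\m_{S'}^{pd}$ forces this leading form to be divisible by $P^{p-1}$ in $\gr S'$.

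If $d\le p$, then $\mathrm{in}(\epsilon')$ is $P$-free of degree $d-1<p$. Also $\mathrm{in}(\eta)$ is congruent modulo $P$ to $\mathrm{in}(h)$, which is a non-zero-divisor. I would like to conclude that $\mathrm{in}(\eta)\mathrm{in}(\epsilon')^p$ is nonzero modulo $P$ in $k[P,X,Y,L]/(L^p,\mathrm{in}(h)^p)$. That requires checking that $\mathrm{in}(\epsilon')^p$ is not killed by $L^p$, using that $\mathrm{in}(\epsilon')$ has $L$-degree at most $d-1$, and not killed by $\mathrm{in}(h)^p$. Nonvanishing modulo $P$ then contradicts divisibility by $P^{p-1}$, and hence $d\ge p+1$.

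The step I expect to be the obstacle is exactly this nonvanishing claim. The worry is that $\mathrm{in}(\epsilon')^p$ could lie in $(L^p)$ after all, for instance if $e'$ is itself divisible by $l$. If so, I would iterate the argument: peel off one more factor of $L$ via $L^p=P\eta$, each time gaining a factor of $P$ and $p$ in degree. I would use the regular-sequence structure of $\gr S'$, which is a complete intersection in which $P$ is regular, to keep the bookkeeping exact until the order bound reaches $p+1$.
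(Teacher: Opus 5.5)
Your reduction is sound: the mixed binomial terms lie in $p\pi S\subseteq\pi^pS$, so $\epsilon^p\equiv L^p\epsilon'^p=P\eta\epsilon'^p \pmod{\pi^pS}$, hence $\eta\epsilon'^p\in P^{p-1}S'$. Corollary~\ref{cor carry} does control division by $P$ there. But the proof stops exactly where the content of the lemma lies.

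\textbf{First gap.} The step you flag fails in the generic case. Since $\mathrm{in}(\epsilon')$ is $P$-free and we are in characteristic $p$, $\mathrm{in}(\epsilon')^p\equiv(\mathrm{in}(\epsilon')|_{L=0})^p \pmod{L^p}$. So the leading form $\mathrm{in}(\eta)\,\mathrm{in}(\epsilon')^p$ vanishes in $\gr_{\m_{S'}}S'$ whenever $L\mid\mathrm{in}(\epsilon')$; the bound ``$L$-degree $\le d-1<p$'' does not prevent this. Write $F_0(\bar\epsilon)=\sum_{i=1}^{p-1}l^ig_i$ with $e_i=\mathrm{ord}\,g_i$, noting $i+e_i\ge p$ since $\epsilon\in\m_S^p$. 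Then the vanishing happens precisely when $\min_i(i+e_i)=p$ is not attained at $i=1$, and your degree-$pd$ comparison yields nothing.

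``Iterate'' is not yet an argument. The paper peels off all powers of $L$ at once: $\epsilon\equiv\sum_iL^if_i \pmod{\pi}$ with $f_i\in\m_S^{e_i}$, and $L^{ip}=\pi^i\eta^i\equiv\pi^ih^i \pmod{pS}$, giving $\sum_{i=1}^{p-1}P^iA_i\in P^pS'$ with $A_i=h^if_i^p$. It then takes the \emph{smallest} $i_0$ with $i_0+e_{i_0}=p$. The terms with $i<i_0$ lie in $\m_S^{p^2+p}$. Successive division by $P$ (the elements $B_i$), together with Corollary~\ref{cor carry}, keeps $B_{i_0}/P^{i_0-1}$ of order $>p^2$. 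Projecting to $S'/(P,L)\cong C/(H^p)$, where the $L^p$ issue disappears, yields $[H]_p^{i_0}[g_{i_0}]_{p-i_0}^p=0$ in $k[x,y]/([H]_p^p)$.

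\textbf{Second gap.} Even in your good case $i_0=1$, nonvanishing modulo $\mathrm{in}(h)^p$ is not automatic. That $\mathrm{in}(h)$ is a non-zero-divisor on $\gr_{\m_S}S$ says nothing about multiplication by it modulo $\mathrm{in}(h)^p$. What is needed is $[H]_p^{p-i_0}\nmid[g_{i_0}]^p_{p-i_0}$ in $k[x,y]$. This depends on the specific shape $[H]_p=v'\Delta(x,y,-x-y)$, which your proposal never uses.

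Without such input the claim is false in general. If $[H]_p$ were $\ell^p$ for a linear form $\ell$, then $[H]_p\cdot(\ell^{p-1})^p=[H]_p^p$, so the product would die.

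The paper's argument is as follows. Both sides have degree $p(p-i_0)$, so divisibility forces $[H]_p^{p-i_0}=c\,[g_{i_0}]^p_{p-i_0}$ with $c\in k^\times$. But $\Delta(x,y,-x-y)$ has nonzero $xy^{p-1}$-coefficient, so $\mathrm{ord}_x[H]_p=1$. This gives $p\cdot\mathrm{ord}_x[g_{i_0}]_{p-i_0}=p-i_0$, which is impossible for $1\le i_0\le p-1$.
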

\begin{proof}
	Since $\rho(\bar{\epsilon})=0$, there exist $g_i\in k\llbracket x,y\rrbracket$ such that
	\[
		F_0(\bar{\epsilon}) = \sum_{i=1}^{p-1}l^ig_i.
	\]
	Let $e_i:=\mathrm{ord}_{(x,y)}g_i$. Since $\epsilon\in \m_S^p$, it follows that $i+e_i\ge p$.

	For each $i$, choose a lift $f_i\in \m_S^{e_i}$ of $g_i$. If $g_i=0$, then take $f_i=0$. Take $f_0\in S$ such that
	\[
		\epsilon=\sum_{i=1}^{p-1}L^if_i+\pi f_0.
	\]
	Since $\eta \equiv h \pmod {\pi^{p-1}S}$ and $Uh^p+\epsilon^p\in pS$, we obtain
	\[
		Uh^p+\sum_{i=1}^{p-1}\pi^ih^if_i^p \equiv 0 \pmod{pS}.
	\]
	Let $S':=S/(h^p)$ and $A_i:=h^if_i^p$. Then
	\[
		\sum_{i=1}^{p-1}P^iA_i \in pS'.
	\]

	Suppose that $i_0$ is the smallest index $i$ such that $i+e_i=p$. For $i<i_0$, $\mathrm{ord}_{\m_{S}} A_i\ge p^2+p$.
	Set $B_i:=\sum_{j=1}^{i-1}P^{j-1}A_j$. Then
	\begin{align} \label{A B eq}
		B_i\in P^{i-1}S', \ \frac{B_i}{P^{i-1}}+A_i\in PS'.
	\end{align}
	By Corollary \ref{cor carry}, $\mathrm{ord}_{\m_{S'}}(B_{i_0}/P^{{i_0}-1})\ge p^2+p-({i_0}-1)>p^2$. Consider the quotient
	\[
		S'\to S'/(P,L)\cong C/(H^p),
	\]
	where $H:=\rho(\bar{h})\in C\cong k\llbracket x, y\rrbracket$. Note that $\mathrm{ord}_{(x,y)}H=p$ and the $p$-th homogeneous part $[H]_p$ of $H$ is
	\[
		v'\Delta(x,y,-x-y)
	\]
	for some $v'\in k^\times$.
	Hence, $\gr_{(x,y)}(C/(H^p))\cong k[x,y]/([H]_p^p)$.
	By (\ref{A B eq}), it follows that
	\[
		[H]_p^{i_0}[g_{i_0}]_{p-i_0}^p=0 \in k[x,y]/([H]_p^p).
	\]
	Hence, $[H]_p^{p-i_0}\mid [g_{i_0}]_{p-i_0}^p$. Since $\deg ([H]_p^{p-i_0})=\deg [g_{i_0}]_{p-i_0}^p=p(p-i_0)$, there exists $v'_1\in k^\times$ such that $[H]_p^{p-i_0}=v'_1[g_{i_0}]_{p-i_0}^p$. Hence,
\[
	p \, \mathrm{ord}_{x}([g_{i_0}]_{p-i_0})=p-i_0,
\]
which is a contradiction. Hence, $i+e_i\ge p+1$ for any $1\le i\le p-1$.
\end{proof}

\subsection{Completion of the proof}
\begin{proof}[Proof of Lemma \ref{lem evaluation g}]
	By Lemma \ref{lem evaluation epsilon} and $F_0(\bar{\epsilon})\in lA$, there exist $r\in \m_S^p$ and $s\in S$ such that
	\[
		\epsilon=Lr+\pi s.
	\]
	Then
	\begin{align*}
		pG&=uh^p+\Delta^p=uh^p+(v_1h+\epsilon)^p \\
		& =Uh^p+L^pr^p+\pi^ps^p +p B,
	\end{align*}
	for some $B\in (\pi,L)S$.

	Let $V:=Uh^p+L^pr^p$. Then $V\in pS$ and
	\[
		L^pr^p=\pi \eta r^p\in \m_{S}^{p^2+p+1}.
	\]
	By Corollary \ref{cor carry}, it follows that
	\[
		\frac{V}{\pi^p}\in (h^p,\m_{S}^{p^2+1}).
	\]
	Hence, there exists $v'_2\in k$ such that
	\[
		\left[\rho\left(\frac{V}{\pi^p}\right)\right]_{p^2}=v_2'[H]_p^p,
	\]
	where $H:=\rho(\bar{h})$. Since $\rho(u)=-v^p$,
	\[
		g=-v^p\rho\left(\frac{V}{\pi^p}\right)-(v\rho(s))^p.
	\]
	Taking the $p^2$-th homogeneous components of both sides, the conclusion follows.
\end{proof}

\bibliography{mixed.bib}
\bibliographystyle{alpha}

\bigskip

\end{document}